\numberwithin{equation}{section}
\newtheorem{thm}{Theorem}[section]
\theoremstyle{plain}
\newtheorem{lem}[thm]{Lemma}
\theoremstyle{plain}
\theoremstyle{plain}
\theoremstyle{definition}
\newtheorem{rem}[thm]{Remark}
\newtheorem{ex}[thm]{Example}
\newcommand{\N}{{\mathbb N}}
\newcommand{\R}{{\mathbb R}}
\newcommand{\eps}{\varepsilon}
\renewcommand{\le}{\leqslant}
\renewcommand{\ge}{\geqslant}
\def\XXint#1#2#3{{\setbox0=\hbox{$#1{#2#3}{\int}$ }
\vcenter{\hbox{$#2#3$ }}\kern-.57\wd0}}
\DeclareMathOperator*{\esssup}{ess\, sup}
\newcommand{\restr}[2]{\left.#1\right|_{#2}}
\newenvironment{enumroman}{\begin{enumerate}

}{\end{enumerate}}
\title[A partial differential inclusion via nonsmooth Morse theory]{Three solutions for a Neumann partial differential inclusion via nonsmooth Morse theory}
\author[F.\ Colasuonno]{Francesca Colasuonno}
\author[A.\ Iannizzotto]{Antonio Iannizzotto}
\author[D.\ Mugnai]{Dimitri Mugnai}
\address{Dipartimento di Scienze Matematiche ``G.L.\ Lagrange''
\newline\indent
Politecnico di Torino
\newline\indent
Corso Duca degli Abruzzi 24, 10129 Torino, Italy}
\email{francesca.colasuonno@polito.it}
\address{Dipartimento di Matematica e Informatica
\newline\indent
Universit\`a degli Studi di Cagliari
\newline\indent
Viale L. Merello 92, 09123 Cagliari, Italy}
\email{antonio.iannizzotto@unica.it}
\address{Dipartimento di Matematica e Informatica
\newline\indent
Universit\`a degli Studi di Perugia
\newline\indent
Via Vanvitelli 1, 06123 Perugia, Italy}
\email{dimitri.mugnai@unipg.it}
\subjclass[2010]{49J52, 49K24, 58E05}
\keywords{$p$-Laplacian, partial differential inclusion, Morse theory}
\begin{document}

\begin{abstract}
We study a partial differential inclusion, driven by the $p$-Laplacian operator, involving a $p$-superlinear nonsmooth potential, and subject to Neumann boundary conditions. By means of nonsmooth critical point theory, we prove the existence of at least two constant sign solutions (one positive, the other negative). Then, by applying the nonsmooth Morse relation, we find a third non-zero solution.
\end{abstract}

\maketitle

\section{Introduction}\label{sec1}

In the present paper we deal with the following partial differential inclusion, coupled with homogeneous Neumann boundary conditions:
\begin{equation} \label{prob}
\begin{cases}
-\Delta_p u\in\partial j(x,u) &\text{in $\Omega$,} \\
\displaystyle\frac{\partial u}{\partial\nu}=0 &\text{on $\partial\Omega$.}
\end{cases}
\end{equation}
Here and in what follows, $\Omega\subset\R^N$ ($N\geq 2$) is a bounded domain with $C^2$ boundary $\partial\Omega$, $p>1$ and the $p$-Laplacian operator is defined as $\Delta_p u={\rm div}(|\nabla u|^{p-2}\nabla u)$, while $\partial u/\partial\nu(x)$ denotes the outward normal derivative of $u$ at $x\in\partial\Omega$. Finally, $\partial j(x,s)$ denotes the Clarke generalized subdifferential (with respect to $s$) of a potential $j:\Omega\times\R\to\R$ which is assumed to be measurable in $x$ and locally Lipschitz continuous in $s$.

Problems of the type \eqref{prob} have been studied in a variational framework since the pioneering work of Chang \cite{C1}, dealing with partial differential equations involving a discontinuous reaction term. The best fitting theoretical framework is the nonsmooth critical point theory for locally Lipschitz continuous functionals developed by Clarke \cite{C4}. Such approach led to many results, for instance those contained in the monographs of Carl, Le $\&$ Motreanu \cite{CLM} and of Gasi\'nski $\&$ Papageorgiou \cite{GP1} and the papers of Averna, Marano \& Motreanu \cite{AMM}, Barletta $\&$ Papageorgiou \cite{BP}, Carl \& Motreanu \cite{CM}, Iannizzotto, Marano $\&$ Motreanu \cite{IMM}, Iannizzotto $\&$ Papageorgiou \cite{IP}, Kyritsi $\&$ Papageorgiou \cite{KP}. In all the mentioned works, nonsmooth critical point theory is employed together with some additional tool, such as sub- and supersolutions or spectral theory.

In most results, the potential is either $p$-sublinear or asymptotically $p$-linear at infinity, while here we shall consider a function $j(x,\cdot)$ which is $p$-{\rm superlinear} at infinity. Such a study was developed for the $p$-Laplacian by Bartsch $\&$ Liu \cite{BL}, Degiovanni $\&$ Lancelotti \cite{DL}, Perera \cite{P1} (for a Dirichlet problem) and Aizicovici, Papageorgiou $\&$ Staicu \cite{APS}, Binding, Dr\'abek $\&$ Huang \cite{BDH} (for a Neumann problem). A typical difficulty in the $p$-superlinear case is that one loses control on the asymptotic behavior of the energy functional related to the problem, which suffers as a consequence of a lack of compactness of critical sequences. An usual way to overcome such difficulty is to require some version of the Ambrosetti-Rabinowitz condition (shortly, $(AR)$).

In the $C^1$ framework, Wang \cite{W} proved the existence of three non-zero solutions for a Dirichlet problem driven by the Laplacian operator ($p=2$), involving a reaction term which is superlinear at infinity and satisfies $(AR)$ (see also Mugnai \cite{M}, \cite{M1}, Rabinowitz, Su $\&$ Wang \cite{RSW}, and, for the non smooth case, Magrone, Mugnai $\&$ Servadei \cite{MMS}). The author employed a technique based on Morse theory, mainly on the computation of the critical groups of the energy functional at zero and at infinity. Morse theory is indeed a powerful tool in producing multiplicity results, both for semilinear and for quasilinear problems, as it describes the local behavior of a functional around its critical points, as well as its asymptotic behavior,  through simple algebraic relations. For an introduction to Morse theory, we refer to the monographs by Ambrosetti $\&$ Malchiodi \cite{AM}, Chang \cite{C2}, Motreanu, Motreanu $\&$ Papageorgiou \cite{MMP}, Perera, Agarwal $\&$ O'Regan \cite{PAO}, and to the survey paper of Bartsch, Szulkin \& Willem \cite{BSW}. Interesting applications to several types of $p$-Laplacian problems can be found in the papers of Degiovanni, Lancelotti \& Perera \cite{DLP}, Liu \& Li \cite{LL2}, Liu \& Liu \cite{LL1}, Perera \cite{P}, and Perera \& Sim \cite{PS1}.

The basic notions and results of Morse theory (namely the so-called Morse relation) have been extended to nonsmooth functionals by Corvellec \cite{C}. Exploiting such an extension, we aim at proving the existence of at least three non-zero solutions for problem \eqref{prob} if the potential is $p$-superlinear at infinity and satisfies a mild version of $(AR)$, thus extending to a wider framework the ideas of \cite{W}. In doing so, we shall need to adapt some techniques of Morse theory to the nonsmooth setting (see mainly Sections \ref{sec3} and \ref{sec5} below), which we believe can be useful also for further results. As far as we know, this is the first application of Morse-theoretical ideas in the field of partial differential inclusions.
\medskip

\noindent
Our precise assumptions on the potential $j$ are the following:
\begin{itemize}[leftmargin=0.5cm]
\item[{\bf H}] $j:\Omega\times\R\to\R$ is a Carath\'eodory function such that $j(x,\cdot)$ is locally Lipschitz continuous for a.a. $x\in\Omega$, and $j(\cdot,0)\in L^1(\Omega)$; moreover, it satisfies
\begin{enumroman}
\item\label{h1} there exist $a_0>0$ and $r\in(p,p^*)$ such that
$|\xi|\leq a_0(1+|s|^{r-1})$ for a.a. $x\in\Omega$ and all $s\in\R$, $\xi\in\partial j(x,s)$, where $p^*$ denotes the Sobolev critical exponent
\[p^*=\begin{cases}
Np/(N-p) &\mbox{if } p<N,\\
\infty &\mbox{if }p\ge N;\end{cases}\]
\item\label{h2} $\displaystyle\lim_{|s|\to \infty}\frac{j(x,s)}{|s|^p}=\infty$ uniformly for a.a. $x\in\Omega$;
\item\label{h3} there exists  $q\in\Big((r-p)\max\big\{\frac{N}{p},1\big\},p^*\Big)$ such that  
\[
\liminf_{|s|\to \infty}\min_{\xi\in\partial j(x,s)}\frac{\xi s-p j(x,s)}{|s|^q}>0 \mbox{ uniformly for a.a. $x\in\Omega$};
\]
\item\label{h4} there exists $\delta_0>0$ such that  $j(x,s)\leq j(x,0)$ for a.a. $x\in\Omega$ and all $|s|\leq\delta_0$;
\item\label{h5} there exists $c_0>0$ such that  $\xi s\geq -c_0|s|^p$ for a.a. $x\in\Omega$ and all $s\in\R$, $\xi\in\partial j(x,s)$.
\end{enumroman}
\end{itemize}
Our main result is stated as follows:

\begin{thm}\label{three}
If hypotheses ${\bf H}$ hold, then problem \eqref{prob} admits at least three smooth non-zero solutions $u_+,u_-,\tilde u\in C^1(\overline\Omega)$ such that $u_+(x)>0>u_-(x)$ in $\overline\Omega$.
\end{thm}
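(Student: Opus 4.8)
The plan is to study the energy functional $\varphi\colon W^{1,p}(\Omega)\to\R$ naturally associated with \eqref{prob},
\[
\varphi(u)=\frac{1}{p}\int_\Omega|\nabla u|^p\,dx-\int_\Omega j(x,u)\,dx,
\]
which by \ref{h1} is locally Lipschitz continuous and whose nonsmooth critical points are exactly the weak solutions of \eqref{prob}; because of the Neumann condition the ambient space is the whole of $W^{1,p}(\Omega)$. The first task is compactness: the $p$-superlinearity \ref{h2} destroys the Palais--Smale condition, but the one-sided Ambrosetti--Rabinowitz inequality \ref{h3}, whose exponent $q$ is calibrated so that $\xi s-pj(x,s)$ dominates a power controlling the $W^{1,p}$-norm, yields boundedness of Cerami sequences; the $(S)_+$-property of $-\Delta_p$ then upgrades weak to strong convergence, giving the nonsmooth Cerami condition for $\varphi$ and for the truncated functionals below.

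Next I would produce the two constant-sign solutions. Introduce suitably truncated functionals $\varphi_+,\varphi_-$, obtained by replacing $j(x,\cdot)$ with its restriction to $[0,+\infty)$ (resp.\ $(-\infty,0]$) together with a sign-forcing term, so that every nonzero critical point of $\varphi_\pm$ is a nonnegative (resp.\ nonpositive) solution of \eqref{prob}. Hypothesis \ref{h4} makes $0$ a local minimizer of $\varphi$ and of $\varphi_\pm$ in the $C^1(\overline\Omega)$-topology, and the nonsmooth analogue of the Brezis--Nirenberg-type equivalence between $C^1$- and $W^{1,p}(\Omega)$-local minimizers promotes this to a local minimum in $W^{1,p}(\Omega)$; meanwhile \ref{h2} forces $\varphi_\pm$ to be unbounded below along the constant functions. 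The nonsmooth Mountain Pass Theorem then yields critical points $u_\pm\neq 0$ of $\varphi_\pm$; testing the inclusion against $u_\pm^\mp$ and invoking the sign condition \ref{h5} gives $u_+\ge 0\ge u_-$, the regularity theory of Lieberman yields $u_\pm\in C^1(\overline\Omega)$, and the strong maximum principle of V\'azquez upgrades this to $u_+>0>u_-$ in $\overline\Omega$.

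The third solution comes from the nonsmooth Morse relation of Corvellec, and the heart of the matter is the computation of the critical groups $C_k(\varphi,\cdot)$. At the origin the local-minimizer property gives $C_k(\varphi,0)=\delta_{k,0}\,\Z$. At infinity, the superlinearity \ref{h2} reinforced by \ref{h3} makes the deep sublevel sets of $\varphi$ contractible in the (contractible) space $W^{1,p}(\Omega)$, so $C_k(\varphi,\infty)=0$ for all $k$. At the constant-sign solutions I would note that a $C^1$-neighborhood of $u_\pm$ consists of fixed-sign functions, on which $\varphi$ coincides with $\varphi_\pm$; computing the critical groups for the truncated functionals --- for which $0$ is a local minimum, $u_\pm$ is a mountain-pass-type point, and (discarding the trivial case of infinitely many solutions) the only other critical point --- yields $C_k(\varphi,u_\pm)=C_k(\varphi_\pm,u_\pm)=\delta_{k,1}\,\Z$.

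Finally, suppose for contradiction that $0,u_+,u_-$ exhaust the critical set of $\varphi$. Inserting the above into the Morse relation gives
\[
t^0+2t^1=(1+t)\,Q(t)
\]
for some polynomial $Q$ with nonnegative integer coefficients, which is absurd because evaluating at $t=-1$ yields $-1=0$. Hence $\varphi$ possesses a fourth critical point $\tilde u\notin\{0,u_+,u_-\}$; in particular $\tilde u\neq 0$, and the regularity theory again gives $\tilde u\in C^1(\overline\Omega)$. I expect the principal obstacle to lie in the critical-group computations in the nonsmooth framework --- establishing the $C^1$-versus-$W^{1,p}(\Omega)$ minimizer equivalence, the vanishing of $C_k(\varphi,\infty)$, and, above all, the exact evaluation (not merely $C_1\neq 0$) of the critical groups at the mountain-pass solutions $u_\pm$ --- which is precisely the nonsmooth Morse-theoretic machinery to be built in the preliminary sections.
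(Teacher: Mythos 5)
Your plan reproduces the paper's architecture essentially step for step: the same energy functional with the Aubin--Clarke inclusion $\partial\varphi(u)\subseteq A(u)-N(u)$, the Cerami condition via {\bf H}\ref{h3} plus interpolation and the $(S)_+$-property, the penalized truncations $\varphi^\eps_\pm$, the $C^1$-versus-$W^{1,p}$ local minimizer argument at $0$ from {\bf H}\ref{h4}, the nonsmooth mountain pass plus Lieberman regularity and V\'azquez's maximum principle for $u_\pm$, vanishing critical groups at infinity via a radial retraction onto deep sublevel sets, and the Poincar\'e--Hopf contradiction $1+2t=(1+t)Q(t)$ at $t=-1$. Two cosmetic corrections: the paper takes homology with $\R$-coefficients, so $C_k(\varphi,0)=\delta_{k,0}\R$; and the sign of $u_+$ comes from testing the inclusion with $u^-$ against the $\eps$-penalty (the structure of $\partial j_+$), while {\bf H}\ref{h5} is what licenses V\'azquez's principle, not the sign itself.

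The one genuine gap is your justification of $C_k(\varphi,u_\pm)=C_k(\varphi^\eps_\pm,u_\pm)$. That ``a $C^1$-neighborhood of $u_\pm$ consists of fixed-sign functions, on which $\varphi$ coincides with $\varphi_\pm$'' does not suffice: the critical groups are defined through sublevel sets in the $W^{1,p}(\Omega)$-topology, and every $W^{1,p}$-ball around $u_+$ contains sign-changing functions on which $\varphi\neq\varphi^\eps_+$ (a $C^1$-neighborhood has empty $W^{1,p}$-interior). In the smooth setting one could invoke the equality of critical groups computed in the $C^1$ and $W^{1,p}$ topologies, but no such off-the-shelf result exists for locally Lipschitz functionals, and this is exactly where the paper works hardest (Lemma \ref{cgpm}): it interpolates $\psi_t=(1-t)\varphi+t\varphi^\eps_+$, shows $u_+\in K(\psi_t)$ for every $t\in[0,1]$ by comparing generalized directional derivatives of the restrictions to $C^1(\overline\Omega)$ and using density, proves that $u_+$ is an isolated critical point \emph{uniformly in $t$} via $L^\infty$ and $C^{1,\gamma}$ a priori bounds for the auxiliary inclusions \eqref{auxn} (nearby critical points of $\psi_{t_n}$ would converge to $u_+$ in $C^1(\overline\Omega)$, land in ${\rm int}(C_+)$, and hence solve \eqref{prob}, contradicting \eqref{abs}), and only then applies homotopy invariance of nonsmooth critical groups in the sense of Corvellec--Hantoute. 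Your subsequent step --- the exact evaluation $C_k(\varphi^\eps_+,u_+)=\delta_{k,1}\R$ rather than merely $C_1\neq 0$ --- is right in spirit: with $K(\varphi^\eps_+)=\{0,u_+\}$, $0$ a strict local minimizer, and $C_k(\varphi^\eps_+,\infty)=0$, the long exact sequence of the pair forces the answer; note, however, that the vanishing at infinity for the \emph{truncated} functionals must be proved separately (the paper's Lemma \ref{cginf2}: the retraction there lands on the set $S_+$ of directions that are positive somewhere, which is contractible), since {\bf H}\ref{h2} gives blow-up of $\varphi^\eps_+(tu)$ only along such directions, not on the whole unit sphere.
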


We present here two examples of potentials satisfying hypotheses ${\bf H}$:

\begin{ex}\label{ex1}
Let $p<\sigma<p^*$ and set for all $s\in\R$
\[j_1(s)=\begin{cases}
0 \quad&\mbox{if } s=0,\\
\displaystyle \frac{|s|^\sigma}{\sigma}\ln|s| \quad&\mbox{if } 0<|s|<1,\\
\displaystyle \frac{|s|^p}{p}\ln|s| \quad&\mbox{if } |s|\geq 1,
\end{cases}\]
and
\[j_2(s)=\frac{|s|^\sigma}{\sigma}-\frac{|s|^p}{p}.\]
Then, both potentials $j_1,j_2:\R\to\R$ satisfy hypotheses ${\bf H}$ (notice that $j_2$ is of class $C^1$ in $\R$, while $j_1$ is only Lipschitz continuous near $s=1$).
\end{ex}

We remark that our three solutions theorem differs from most results of this type available in the literature. Indeed, while usually one detects two local minimizers of the energy functional and then, by applying the mountain pass theorem, the existence of a third critical point is guaranteed (see for instance \cite{IP}), here we find two critical points of mountain pass type and then a third critical point (of undetermined nature) is found via Morse theory. We also emphasize the fact that we don't require the usual $(AR)$, but a weaker condition, see {\bf H}\ref{h3}.
\smallskip

The paper has the following structure. In Section \ref{sec2} we recall some basic features of nonsmooth critical point theory and prove a nonsmooth implicit function lemma (see Lemma \ref{ift}). Section \ref{sec3} is devoted to a presentation of the nonsmooth Morse theory. In Section \ref{sec4}, we use the nonsmooth mountain pass theorem to prove that \eqref{prob} admits two constant sign smooth solutions, one positive and the other negative (see Theorem \ref{two}). Finally, in Section~\ref{sec5}, after computing the critical groups of the energy fuctional at zero, at such constant sign solutions, and at infinity, we deduce the existence of a third smooth nontrivial solution (of undetermined sign) and conclude the proof of Theorem \ref{three}.

\section{Preliminaries I: nonsmooth critical point theory}\label{sec2}

First, we introduce some notation. Throughout the paper, $(X,\|\cdot\|)$ denotes a reflexive Banach space, $(X^*,\|\cdot\|_*)$ its topological dual and $\langle\cdot,\cdot\rangle$ the duality between $X^*$ and $X$. $B_\rho(u)$, $\overline B_\rho(u)$ and $\partial B_\rho(u)$ will denote the open and closed balls and the sphere in $X$ centered at $u\in X$ with radius $\rho>0$, respectively. For all $\varphi:X\to\R$ and all $c\in\R$ we set
\[\varphi^c=\{u\in X \ : \ \varphi(u)<c\}, \quad \overline\varphi^c=\{u\in X \ : \ \varphi(u)\leq c\}\]
(we also set $\overline\varphi^\infty=X$). Finally, when estimates are considered, we shall denote by $c>0$ positive constants, which are allowed to vary from line to line.

We recall some basic notions and results from nonsmooth critical point theory, based on the ideas of Clarke \cite{C4}, referring to Gasi\'nski $\&$ Papageorgiou \cite{GP1} for details. A functional $\varphi:X\to\R$ is said to be {\em locally Lipschitz continuous} if for every $u\in X$ there exist a neighborhood $U$ of $u$ and $L>0$ such that 
\[|\varphi(v)-\varphi(w)|\leq L\|v-w\| \ \mbox{for all $v,w\in U$.}\]
From now on we assume $\varphi$ to be locally Lipschitz continuous.
The {\em generalized directional derivative} of $\varphi$ at $u$ along $v\in X$ is
\[\varphi^\circ(u;v)=\limsup_{w\to u\above 0pt t\to 0^+}\frac{\varphi(w+tv)-\varphi(w)}{t}.\]
The {\em generalized subdifferential} of $\varphi$ at $u$ is the set
\[\partial\varphi(u)=\left\{u^*\in X^*:\, \langle u^*,v\rangle\leq\varphi^\circ(u;v) \ \mbox{for all $v\in X$}\right\}.\]
The following Lemmas display some basic properties of the notions introduced above, see \cite[Propositions 1.3.7-1.3.12]{GP1}:

\begin{lem}\label{gd}
If $\varphi,\,\psi:X\to\R$ are locally Lipschitz continuous, then
\begin{enumroman}
\item\label{gd1} $\varphi^\circ(u;\cdot)$ is positively homogeneous,
sub-additive and continuous for all $u\in X$;
\item\label{gd2} $\varphi^\circ(u;-v)=(-\varphi)^\circ(u;v)$ for all $u,v\in X$;
\item\label{gd3} if $\varphi\in C^1(X)$, then $\varphi^\circ(u;v)=\langle\varphi'(u),v\rangle$ for all $u,v\in X$;
\item\label{gd4} $(\varphi+\psi)^\circ(u;v)\leq\varphi^\circ(u;v)+\psi^\circ(u;v)$ for all $u,v\in X$.
\end{enumroman}
\end{lem}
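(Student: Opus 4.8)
The plan is to derive all four assertions directly from the definition of the generalized directional derivative, using only two elementary tools: the subadditivity of the upper limit, $\limsup(a+b)\le\limsup a+\limsup b$, together with the fact that, having fixed a neighborhood $U$ of $u$ on which $\varphi$ is Lipschitz with constant $L>0$, one has $|\varphi(w+tv)-\varphi(w)|\le Lt\|v\|$ for $w\in U$ and $t>0$ small enough that $w+tv\in U$.

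I would begin with \ref{gd1}. Positive homogeneity, $\varphi^\circ(u;\lambda v)=\lambda\varphi^\circ(u;v)$ for $\lambda>0$, follows by the substitution $s=\lambda t$ in the difference quotient. Subadditivity comes from the splitting
\[\frac{\varphi(w+t(v_1+v_2))-\varphi(w)}{t}=\frac{\varphi((w+tv_1)+tv_2)-\varphi(w+tv_1)}{t}+\frac{\varphi(w+tv_1)-\varphi(w)}{t},\]
where I would observe that as $(w,t)\to(u,0^+)$ the shifted base point $w+tv_1$ also tends to $u$; taking $\limsup$ and invoking its subadditivity then yields $\varphi^\circ(u;v_1+v_2)\le\varphi^\circ(u;v_1)+\varphi^\circ(u;v_2)$. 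The Lipschitz bound gives $|\varphi^\circ(u;v)|\le L\|v\|$, and combining this with subadditivity and positive homogeneity produces $\varphi^\circ(u;v_1)-\varphi^\circ(u;v_2)\le\varphi^\circ(u;v_1-v_2)\le L\|v_1-v_2\|$ and, by symmetry, the reverse inequality; hence $\varphi^\circ(u;\cdot)$ is in fact Lipschitz, and in particular continuous.

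For \ref{gd2} I would perform the change of variables $w\mapsto w-tv$: since this shift preserves the limit $w\to u$ as $t\to 0^+$, the defining $\limsup$ for $(-\varphi)^\circ(u;v)$ transforms exactly into the one for $\varphi^\circ(u;-v)$. Property \ref{gd3} uses that, for $\varphi\in C^1(X)$, the scalar mean value theorem applied to $s\mapsto\varphi(w+sv)$ writes the difference quotient as $\langle\varphi'(\zeta),v\rangle$ for some $\zeta$ on the segment $[w,w+tv]$; as $(w,t)\to(u,0^+)$ one has $\zeta\to u$, so continuity of $\varphi'$ forces the quotient to converge to $\langle\varphi'(u),v\rangle$, whence the $\limsup$ coincides with this genuine limit. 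Finally \ref{gd4} is immediate, since the difference quotient of $\varphi+\psi$ splits as the sum of those of $\varphi$ and $\psi$, and subadditivity of $\limsup$ closes the argument.

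None of these steps constitutes a genuine obstacle; the only points requiring slight care are the continuity claim in \ref{gd1}, where one must first secure the linear growth estimate $|\varphi^\circ(u;v)|\le L\|v\|$ before upgrading subadditivity to Lipschitz continuity, and the remark in the subadditivity proof that translating the base point by $tv_1$ leaves the limit $w\to u$ unaffected because $t\to 0^+$.
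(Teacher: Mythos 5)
Your proof is correct and complete; the paper itself does not prove this lemma but simply recalls it from the literature, citing \cite[Propositions 1.3.7--1.3.12]{GP1}, and your argument is precisely the standard one given there (and in Clarke's book): the substitution $s=\lambda t$ for homogeneity, base-point shifting plus subadditivity of $\limsup$ for sub-additivity, the bound $|\varphi^\circ(u;v)|\le L\|v\|$ upgraded to Lipschitz continuity in $v$, the change of variables $w\mapsto w+tv$ for \ref{gd2}, and the mean value theorem with continuity of $\varphi'$ for \ref{gd3}. You also correctly flag the one genuine subtlety, namely that in the sub-additivity step the shifted base point $w+tv_1$ is coupled to $t$, so its difference quotients form a subfamily of the full $\limsup$ family, which is exactly why the estimate goes the right way.
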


\begin{lem}\label{gg}
If $\varphi,\psi:X\to\R$ are locally Lipschitz continuous, then
\begin{enumroman}
\item\label{gg1} $\partial\varphi(u)$ is convex, closed and weakly$^*$ compact for all $u\in X$;
\item\label{gg2} the multifunction $\partial\varphi:X\to 2^{X^*}$ is upper semicontinuous with respect to the weak$^*$ topology on $X^*$;
\item\label{gg3} if $\varphi\in C^1(X)$, then $\partial\varphi(u)=\{\varphi'(u)\}$ for all $u\in X$;
\item\label{gg4} $\partial(\lambda\varphi)(u)=\lambda\partial\varphi(u)$ for all $\lambda\in\R$, $u\in X$;
\item\label{gg5} $\partial(\varphi+\psi)(u)\subseteq\partial\varphi(u)+\partial\psi(u)$ for all $u\in X$;
\item\label{gg6} for all $u,v\in X$ there exists $u^*\in\partial\varphi(u)$ such that  $\langle u^*,v\rangle=\varphi^\circ(u;v)$;
\item\label{gg7} if $g\in C^1(\R,X)$, then $\varphi\circ g:\R\to\R$ is locally Lipschitz, and $$\partial(\varphi\circ g)(t)\subseteq\Big\{\langle u^*,g'(t)\rangle:\,u^*\in\partial\varphi(g(t))\Big\}$$ for all $t\in\R$;
\item\label{gg8} if $u$ is a local minimizer (or maximizer) of $\varphi$, then $0\in\partial\varphi(u)$.
\end{enumroman}
\end{lem}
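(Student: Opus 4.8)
The plan is to derive all eight items from the definition of $\partial\varphi(u)$ together with the properties of $\varphi^\circ(u;\cdot)$ collected in Lemma \ref{gd}, the local Lipschitz bound near $u$, and the Hahn--Banach and Banach--Alaoglu theorems. Starting with \ref{gg1}, I would observe that $\partial\varphi(u)$ is the intersection over $v\in X$ of the weak$^*$-closed half-spaces $\{u^*:\langle u^*,v\rangle\le\varphi^\circ(u;v)\}$, hence convex and weak$^*$ closed. Since $\varphi$ is locally Lipschitz with some constant $L$ near $u$, the $\limsup$ definition gives $\varphi^\circ(u;v)\le L\|v\|$, so every $u^*\in\partial\varphi(u)$ satisfies $\|u^*\|_*\le L$; boundedness plus weak$^*$ closedness yields weak$^*$ compactness by Banach--Alaoglu. (Nonemptiness of $\partial\varphi(u)$ follows from the Hahn--Banach argument used for \ref{gg6} below.)

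The algebraic items are quick consequences of Lemma \ref{gd}. For \ref{gg3}, Lemma \ref{gd}\ref{gd3} gives $\varphi^\circ(u;v)=\langle\varphi'(u),v\rangle$, so the defining inequality forces $\langle u^*-\varphi'(u),v\rangle\le 0$ for every $v$, whence $u^*=\varphi'(u)$. For \ref{gg4} I would split into the case $\lambda\ge 0$, where positive homogeneity of $\varphi^\circ(u;\cdot)$ gives $(\lambda\varphi)^\circ(u;\cdot)=\lambda\varphi^\circ(u;\cdot)$, and the case $\lambda<0$, where Lemma \ref{gd}\ref{gd2} converts the sign. For \ref{gg8}, at a local minimizer $u$ the difference quotient $t^{-1}(\varphi(u+tv)-\varphi(u))$ is nonnegative for small $t>0$, so taking the $\limsup$ along $w=u$ yields $\varphi^\circ(u;v)\ge 0$ for every $v$, i.e. $0\in\partial\varphi(u)$; the maximizer case follows by replacing $\varphi$ with $-\varphi$ and using Lemma \ref{gd}\ref{gd2}.

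The substantial point is the max formula \ref{gg6}, from which \ref{gg5} and \ref{gg7} follow. For \ref{gg6} I would fix $v$, define a linear functional on $\mathrm{span}\{v\}$ by $tv\mapsto t\,\varphi^\circ(u;v)$, check that it is dominated by the sublinear map $\varphi^\circ(u;\cdot)$ using the positive homogeneity and subadditivity of Lemma \ref{gd}\ref{gd1}, and invoke Hahn--Banach to extend it to some $u^*\in X^*$ with $\langle u^*,\cdot\rangle\le\varphi^\circ(u;\cdot)$ and $\langle u^*,v\rangle=\varphi^\circ(u;v)$; by definition $u^*\in\partial\varphi(u)$. Then \ref{gg5} follows by combining \ref{gg6} with the subadditivity $(\varphi+\psi)^\circ\le\varphi^\circ+\psi^\circ$ of Lemma \ref{gd}\ref{gd4} and a separation argument for the convex weak$^*$-compact sets of \ref{gg1}. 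The chain rule \ref{gg7} I would obtain by first checking that $\varphi\circ g$ is locally Lipschitz (a Lipschitz map composed with a $C^1$ curve) and then comparing $(\varphi\circ g)^\circ(t;\tau)$ with $\varphi^\circ(g(t);\tau g'(t))$ through a first-order expansion of $g$, which produces the stated inclusion.

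The hard part will be the upper semicontinuity \ref{gg2}, which I expect to need a closed-graph argument in the weak$^*$ topology. Concretely, if $u_n\to u$ in $X$, $u_n^*\in\partial\varphi(u_n)$ and $u_n^*\xrightarrow{w^*}u^*$, one must pass to the limit in $\langle u_n^*,v\rangle\le\varphi^\circ(u_n;v)$ using the upper semicontinuity of $(u,v)\mapsto\varphi^\circ(u;v)$ (itself immediate from the $\limsup$ definition) to conclude $u^*\in\partial\varphi(u)$; combined with the uniform bound $\|u_n^*\|_*\le L$ from \ref{gg1} and Banach--Alaoglu, this gives upper semicontinuity of the multifunction $\partial\varphi$. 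All of these arguments are classical in Clarke's calculus, so the only real care needed is to make the local Lipschitz constants uniform on the neighborhoods in play.
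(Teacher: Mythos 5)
Your sketch is correct, and it is essentially the paper's own approach in the only sense available: the paper offers no proof of Lemma \ref{gg}, citing \cite[Propositions 1.3.7--1.3.12]{GP1}, and your outline reproduces precisely the classical Clarke-calculus arguments given there (half-space intersection plus Banach--Alaoglu for \ref{gg1}, Hahn--Banach domination by the sublinear functional $\varphi^\circ(u;\cdot)$ for \ref{gg6}, support-function separation for \ref{gg5}, the first-order expansion of $g$ for \ref{gg7}, and the closed-graph argument for \ref{gg2}). The one point worth making explicit is that your sequential closed-graph argument in \ref{gg2} is legitimate here because the paper's standing assumption that $X$ is reflexive lets you extract weakly convergent subsequences from the bounded sequence $(u_n^*)_n$ via Eberlein--\v{S}mulian; in a general Banach space the weak$^*$ topology is not sequential on bounded sets and one would phrase the same argument with nets.
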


We also recall Lebourg's mean value theorem, see \cite[Proposition 1.3.14]{GP1}:

\begin{thm}\label{mvt}
If $\varphi:X\to\R$ is locally Lipschitz and $u,v\in X$, then there exist $\tau\in(0,1)$ and $w^*\in\partial\varphi(\tau u+(1-\tau)v)$ such that 
\[\varphi(v)-\varphi(u)=\langle w^*,v-u\rangle.\]
\end{thm}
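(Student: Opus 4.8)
The plan is to reduce this infinite-dimensional statement to a one-dimensional nonsmooth Rolle-type argument along the segment joining $u$ and $v$, and then to pull the resulting information back through the chain and sum rules for the generalized subdifferential. First I would set $\gamma(t)=tu+(1-t)v$ for $t\in\R$, which is an affine (hence $C^1$) map $\R\to X$ with $\gamma'(t)=u-v$, and introduce the auxiliary function
\[\psi(t)=\varphi(\gamma(t))+t\big(\varphi(v)-\varphi(u)\big).\]
By Lemma \ref{gg}\ref{gg7} the composition $\varphi\circ\gamma$ is locally Lipschitz, hence so is $\psi:\R\to\R$; moreover a direct evaluation gives $\psi(0)=\varphi(v)=\psi(1)$. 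This is precisely the nonsmooth analogue of the Rolle configuration, since the values of $\psi$ at the endpoints of $[0,1]$ coincide.

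Next I would exploit continuity and compactness. As $\psi$ is continuous on the compact interval $[0,1]$, it attains there both a maximum and a minimum. If $\psi$ is constant, every interior point is a local extremum; otherwise $\max\psi>\min\psi$, and since the common endpoint value $\psi(0)=\psi(1)$ can coincide with at most one of them, at least one extremum is attained at some interior $\tau\in(0,1)$. In either case there is $\tau\in(0,1)$ that is a local extremum of $\psi:\R\to\R$, whence $0\in\partial\psi(\tau)$ by Lemma \ref{gg}\ref{gg8}.

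Finally I would translate this membership into the desired conclusion. Writing $\psi=(\varphi\circ\gamma)+\ell$ with $\ell(t)=t(\varphi(v)-\varphi(u))$ of class $C^1$ and $\ell'(\tau)=\varphi(v)-\varphi(u)$, the sum rule (Lemma \ref{gg}\ref{gg5}, together with Lemma \ref{gg}\ref{gg3} which makes $\partial\ell(\tau)$ a singleton) combined with the chain rule (Lemma \ref{gg}\ref{gg7}, using $\gamma'(\tau)=u-v$) yields
\[\partial\psi(\tau)\subseteq\Big\{\langle w^*,u-v\rangle+\varphi(v)-\varphi(u)\ :\ w^*\in\partial\varphi(\gamma(\tau))\Big\}.\]
Since $0\in\partial\psi(\tau)$, there exists $w^*\in\partial\varphi(\gamma(\tau))$ with $\langle w^*,u-v\rangle+\varphi(v)-\varphi(u)=0$, that is $\varphi(v)-\varphi(u)=\langle w^*,v-u\rangle$; as $\gamma(\tau)=\tau u+(1-\tau)v$, this is exactly the claim.

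The only delicate point is that the chain and sum rules provide mere inclusions rather than equalities; however, these inclusions point in the favorable direction, since asserting that $0$ lies in the left-hand set $\partial\psi(\tau)$ automatically places $0$ in the explicit right-hand set, which is what produces the sought $w^*$. A secondary technical care is to extend $\psi$ to all of $\R$ (as done above) so that an interior extremum on $(0,1)$ genuinely qualifies as a local extremum in the sense of Lemma \ref{gg}\ref{gg8}, and to note that the affine map $\gamma$ is globally $C^1$ so that Lemma \ref{gg}\ref{gg7} applies verbatim.
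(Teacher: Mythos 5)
Your proof is correct. The paper does not prove Theorem \ref{mvt} at all — it is quoted directly from \cite[Proposition 1.3.14]{GP1} — and your argument is exactly the standard Lebourg proof given in that reference (and in Clarke's monograph \cite{C4}): the auxiliary function $\psi(t)=\varphi(tu+(1-t)v)+t\bigl(\varphi(v)-\varphi(u)\bigr)$ with equal endpoint values $\psi(0)=\psi(1)=\varphi(v)$, an interior extremum yielding $0\in\partial\psi(\tau)$ via Lemma \ref{gg}\ref{gg8}, and the sum and chain rule inclusions of Lemma \ref{gg}\ref{gg5}, \ref{gg7}, which — as you correctly observe — point in the favorable direction, so that membership of $0$ in $\partial\psi(\tau)$ produces the required $w^*\in\partial\varphi(\tau u+(1-\tau)v)$.
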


By Lemma \ref{gg} \ref{gg1}, we may define for all $u\in X$
\begin{equation}\label{m}
m(u)=\min_{u^*\in\partial\varphi(u)}\|u^*\|_*,
\end{equation}
We say that $u\in X$ is a {\em critical point} of $\varphi$ if $m(u)=0$ (i.e. $0\in\partial\varphi(u)$). We denote by $K(\varphi)$ the set of critical points of $\varphi$ and, for any $c\in\R$, we set
\[K_c(\varphi)=\left\{u\in K(\varphi):\,\varphi(u)=c\right\}.\]
We say that $c\in\R$ is a {\em critical value} of $\varphi$ if $K_c(\varphi)\ne\emptyset$.

We say that $\varphi$ satisfies the {\em Cerami condition} (for short $(C)$) if, for every sequence $(u_n)_n$ in $X$ such that  $(\varphi(u_n))_n$ is bounded in $\R$ and $(1+\|u_n\|)m(u_n)\to 0$, there exists a convergent subsequence of $(u_n)_n$. Such a condition is commonly used in (both smooth and nonsmooth) critical point theory, as it fits better than the classical Palais-Smale condition with some special situations, especially in the non-coercive case. In particular, the $(C)$-condition appears in the following nonsmooth version of the {\em mountain pass theorem}, see \cite[Theorem 2.1.3]{GP1}.

\begin{thm}\label{mpt}
Let $\varphi:X\to\R$ be locally Lipschitz continuous and satisfy $(C)$. If $u_0,u_1\in X$, $\rho\in(0,\|u_1-u_0\|)$ are such that 
\[\inf_{\partial B_\rho(u_0)}\varphi=\eta_\rho>\max\{\varphi(u_0),\varphi(u_1)\},\]
and let
\[\Gamma=\left\{\gamma\in C([0,1],X):\,\gamma(i)=u_i,\, i=0,1\right\}, \quad c=\inf_{\gamma\in\Gamma}\max_{t\in[0,1]}\varphi(\gamma(t)),\]
then $c\geq\eta_\rho$ and $K_c(\varphi)\neq\emptyset$.
\end{thm}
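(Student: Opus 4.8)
The plan is to combine a purely topological ``crossing'' argument, yielding the lower bound $c\ge\eta_\rho$, with a nonsmooth deformation argument that forces a critical point at level $c$.

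First I would prove $c\ge\eta_\rho$. Fix $\gamma\in\Gamma$: since $\|\gamma(0)-u_0\|=0<\rho$ while $\|\gamma(1)-u_0\|=\|u_1-u_0\|>\rho$, the continuous function $t\mapsto\|\gamma(t)-u_0\|$ attains the value $\rho$ at some $t^*\in(0,1)$ by the intermediate value theorem, whence $\gamma(t^*)\in\partial B_\rho(u_0)$ and $\varphi(\gamma(t^*))\ge\eta_\rho$. Thus $\max_{t\in[0,1]}\varphi(\gamma(t))\ge\eta_\rho$, and taking the infimum over $\Gamma$ gives $c\ge\eta_\rho$; in particular $c>\max\{\varphi(u_0),\varphi(u_1)\}$, a fact I will use to keep the endpoints frozen under the deformation.

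Next I would show $K_c(\varphi)\ne\emptyset$ by contradiction, assuming $c$ is a regular value. Using $(C)$ I would first find $\eps,\delta>0$ such that $(1+\|u\|)\,m(u)\ge\delta$ whenever $|\varphi(u)-c|\le\eps$, with $m$ as in \eqref{m}: otherwise a sequence $(u_n)$ with $\varphi(u_n)\to c$ and $(1+\|u_n\|)m(u_n)\to 0$ would, by $(C)$, converge along a subsequence to some $u$ with $\varphi(u)=c$ and $m(u)=0$, i.e. $u\in K_c(\varphi)$, a contradiction. Then, exploiting the convexity, weak$^*$ compactness and upper semicontinuity of $\partial\varphi$ (Lemma \ref{gg}\ref{gg1},\ref{gg2}) together with the minimax identity $\max_{\|v\|\le 1}\min_{u^*\in\partial\varphi(u)}\langle u^*,v\rangle=m(u)$, I would build a locally Lipschitz field $V$ on the strip with $\|V(u)\|\le 1$ and $\langle u^*,V(u)\rangle\ge\tfrac12\,m(u)$ for all $u^*\in\partial\varphi(u)$, and flow along $-(1+\|u\|)V$. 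Along this flow one has $\tfrac{d}{dt}\varphi(h(t,u))\le-\tfrac12(1+\|u\|)m(u)\le-\tfrac12\delta$ on the strip, so $\varphi$ decreases at a uniform rate while the speed stays bounded by $1+\|u\|$; this is exactly the point where the weighting in $(C)$, rather than the plain Palais--Smale condition, secures a finite crossing time in the non-coercive regime. The resulting homotopy $h:[0,1]\times X\to X$ is the identity off the strip, keeps $t\mapsto\varphi(h(t,u))$ nonincreasing, and satisfies $h(1,\overline\varphi^{c+\eps})\subseteq\overline\varphi^{c-\eps}$.

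To conclude I would choose $\gamma\in\Gamma$ with $\max_t\varphi(\gamma(t))<c+\eps$ (possible since $c$ is an infimum) and shrink $\eps$ so that $\max\{\varphi(u_0),\varphi(u_1)\}<c-\eps$; then $h(1,\cdot)$ fixes $u_0$ and $u_1$, so $\tilde\gamma:=h(1,\gamma(\cdot))\in\Gamma$, while $\max_t\varphi(\tilde\gamma(t))\le c-\eps<c$ contradicts the definition of $c$. I expect the main obstacle to be the construction of this deformation in the nonsmooth, non-coercive setting: one must select a single locally Lipschitz descent direction out of the set-valued map $\partial\varphi$ — gluing the pointwise directions supplied by the minimax identity through a partition of unity and using the upper semicontinuity from Lemma \ref{gg}\ref{gg2} — and calibrate the $1+\|u\|$ weighting so that $(C)$ delivers the uniform descent rate needed to push sublevel sets across the level $c$.
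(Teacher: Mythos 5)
This theorem is not proved in the paper at all: it is quoted verbatim from the literature, with a citation to \cite[Theorem 2.1.3]{GP1}. Your proposal is correct and reconstructs exactly the standard argument behind that cited result --- the intermediate value theorem for the lower bound $c\ge\eta_\rho$, and a Chang-type locally Lipschitz pseudo-gradient field (built from the minimax identity for $m(u)$, the weak$^*$ upper semicontinuity of $\partial\varphi$, and a partition of unity) with the $(1+\|u\|)$ weighting that converts the Cerami condition into a uniform descent rate for the deformation --- so there is nothing to flag beyond routine details (e.g.\ the lower semicontinuity of $m$ needed to get the uniform bound on the strip), all of which are standard.
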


In the proof of our main results we use the following technical lemma, which can be seen as a variant of the nonsmooth {\em implicit function theorem}, see \cite[Theorem 1.3.8]{GP1}:

\begin{lem}\label{ift}
Let $S\subseteq\partial B_1(0)$ be a nonempty set, let $\varphi:X\to\R$ be a locally Lipschitz continuous functional, $\mu<\inf_S \varphi$ be a real number, and suppose that the following conditions hold:
\begin{enumroman}
\item\label{ift1} $\displaystyle\lim_{t\to\infty}\varphi(tu)=-\infty$ for all $u\in S$;
\item\label{ift2} $\langle v^*,v\rangle<0$ for all $v\in\varphi^{-1}(\mu)$, $v^*\in\partial\varphi(v)$.
\end{enumroman}
Then there exists a continuous mapping $\tau:S\to(1,\infty)$ satisfying
\[\varphi(tu)
\begin{cases}
>\mu &\mbox{if $t<\tau(u)$}, \\
=\mu &\mbox{if $t=\tau(u)$}, \\
<\mu &\mbox{if $t>\tau(u)$} \\
\end{cases}\]
for all $u\in S$ and $t\ge 1$.
\end{lem}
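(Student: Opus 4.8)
The plan is to reduce everything to a one-dimensional analysis along each ray $t\mapsto tu$ and then glue the resulting thresholds continuously. Fix $u\in S$ and set $g_u(t)=\varphi(tu)$. By Lemma \ref{gg}\ref{gg7} applied to $\gamma(t)=tu\in C^1(\R,X)$, the function $g_u$ is locally Lipschitz, hence continuous; moreover $g_u(1)=\varphi(u)\ge\inf_S\varphi>\mu$, while condition \ref{ift1} gives $g_u(t)\to-\infty$ as $t\to\infty$. The intermediate value theorem then produces at least one $t^*>1$ with $g_u(t^*)=\mu$, and by continuity near $t=1$ any such crossing satisfies $t^*>1$. I would provisionally set $\tau(u)$ to be the smallest crossing, which gives $\tau(u)\in(1,\infty)$, $g_u(\tau(u))=\mu$, and $g_u(t)>\mu$ for $1\le t<\tau(u)$.

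The heart of the argument is upgrading this to the full trichotomy by showing the crossing is unique. I would exploit condition \ref{ift2} through the chain rule: at any $s\ge 1$ with $g_u(s)=\mu$, Lemma \ref{gg}\ref{gg7} yields $\partial g_u(s)\subseteq\{\langle v^*,u\rangle:\,v^*\in\partial\varphi(su)\}$; since $su\in\varphi^{-1}(\mu)$ and $s>0$, condition \ref{ift2} forces every element of the (nonempty) set $\partial g_u(s)$ to be strictly negative. By Lemma \ref{gg}\ref{gg6} and the definition of the subdifferential one has $g_u^\circ(s;1)=\max\partial g_u(s)<0$, and since $g_u^\circ(s;1)$ is the $\limsup$ of the difference quotients $\frac{g_u(w+t)-g_u(w)}{t}$, a chaining argument over short steps shows $g_u$ is \emph{strictly decreasing} on a neighborhood of $s$. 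Thus every point of the level set $N=\{t\ge 1:\,g_u(t)=\mu\}$ is isolated; as $N$ is also closed and bounded (the crossings lie in a compact interval because $g_u(1)>\mu$ and $g_u\to-\infty$), it is finite. If $N$ had two consecutive points $s_1<s_2$, strict decrease at $s_1$ would push $g_u$ below $\mu$ just to the right of $s_1$ while strict decrease at $s_2$ would keep $g_u$ above $\mu$ just to the left of $s_2$, and the intermediate value theorem would force a crossing strictly between them, a contradiction. Hence $N=\{\tau(u)\}$, and combining uniqueness with the local behavior at $\tau(u)$ gives $g_u(t)<\mu$ for all $t>\tau(u)$, which is exactly the claimed trichotomy.

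For the continuity of $u\mapsto\tau(u)$ (the implicit-function-theorem flavour of the statement) I would fix $u_0\in S$, write $t_0=\tau(u_0)$, and choose $\eps\in(0,t_0-1)$. By the strict trichotomy just established, $\varphi((t_0-\eps)u_0)>\mu$ and $\varphi((t_0+\eps)u_0)<\mu$; since the maps $u\mapsto\varphi((t_0\pm\eps)u)$ are continuous on $S$, both strict inequalities persist for all $u$ in a suitable neighborhood of $u_0$ in $S$. For every such $u$, the unique crossing $\tau(u)$ must lie strictly between $t_0-\eps$ and $t_0+\eps$, whence $|\tau(u)-\tau(u_0)|<\eps$; this proves continuity and completes the construction.

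I expect the main obstacle to be the middle step, namely converting the abstract one-sided condition \ref{ift2} on $\partial\varphi$ into genuine strict monotonicity of the scalar map $g_u$ near the level $\mu$. The two delicate points are the correct handling of the chain-rule inclusion in Lemma \ref{gg}\ref{gg7} (which yields only a superset, so one must verify that $\partial g_u(s)$ itself is nonempty and consists of negative numbers) and the passage from $g_u^\circ(s;1)<0$ to local strict decrease via the difference-quotient definition of the generalized directional derivative. Once strict monotonicity at the level set is secured, both the uniqueness of the crossing and the continuity of $\tau$ follow from elementary compactness and intermediate-value reasoning.
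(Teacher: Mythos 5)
Your proposal is correct and takes essentially the same route as the paper's proof: both reduce to the scalar map $t\mapsto\varphi(tu)$, convert condition \ref{ift2} into a quantitative strict descent estimate at level-$\mu$ points via the chain rule (Lemma \ref{gg}\ref{gg7}), the attainment property (Lemma \ref{gg}\ref{gg6}) and the $\limsup$ definition of the generalized directional derivative, and prove continuity of $\tau$ by the identical persistence-of-strict-inequalities argument. The only organizational difference is in closing uniqueness: the paper argues that the first return to the level $\mu$ would be a local maximizer, contradicting \ref{ift2} through Lemma \ref{gg}\ref{gg7}--\ref{gg8}, whereas you show every level point is a point of local strict decrease (so the level set is finite) and rule out a second crossing by the intermediate value theorem --- a valid minor variant resting on the same descent mechanism.
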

\begin{proof}
Let us fix $u\in S$. By $(i)$ there exists $t>1$ such that  $\varphi(tu)=\mu$.

{\em Claim: $t$ is unique.} We argue by contradiction, assuming that there exist $t_1,t_2$ such that $1<t_1<t_2$ and $\varphi(t_i u)=\mu$ ($i=1,2$). By Lemma \ref{gg} \ref{gg6}, there exists $v^*\in\partial\varphi(t_1u)$ such that  $\varphi^\circ(t_1u;u)=\langle v^*,u\rangle$. Then, by \ref{ift2}, we have for some $\eps>0$
\[\varphi^\circ(t_1u;u)=\frac{1}{t_1}\langle v^*,t_1u\rangle<-\eps;\]
in particular
\[\limsup_{t\to t_1\above 0pt h\to 0^+}\frac{\varphi((t+h)u)-\varphi(tu)}{h}<-\eps.\]
So there exists $\eta>0$ such that  for all $t\in(t_1,t_1+\eta)$, $h\in(0,\eta]$
\[\varphi((t+h)u)<\varphi(tu)-h\eps.\]
Letting $t\to t_1$ we get $\varphi((t_1+h)u)<\mu$ for all $h\in(0,\eta]$, hence $\eta<t_2-t_1$. We define the closed set
\[I=\{t>t_1:\,\varphi(tu)=\mu\}.\]
Then $t_2\in I$ and $I\subseteq(t_1+\eta,\infty)$, so there exists $\bar t=\min I$. Clearly $\varphi(tu)<\mu$ for all $t\in(t_1,\bar t)$, while arguing as above we can find $\eta'>0$ such that  $\varphi(tu)<\mu$ for all $t\in(\bar t,\bar t+\eta)$. We see that $\bar t$ is a local maximizer of the locally Lipschitz continuous mapping $t\mapsto\varphi(tu)$. So, by Lemma \ref{gg} \ref{gg7}-\ref{gg8} there exists $w^*\in\partial\varphi(\bar tu)$ such that
\[\langle w^*,u\rangle=0.\]
This in turn implies $\langle w^*,\bar t u\rangle=0$ with $\bar tu\in\varphi^{-1}(\mu)$, against \ref{ift2}. This contradiction proves the Claim.

By the Claim, the function $\tau:S\to(1,\infty)$, which assigns to every $u\in S$ the only number $\tau(u)\in(1,\infty)$ satisfying $\varphi(\tau(u)u)=\mu$, is well-defined. Now we prove that $\tau$ is continuous. Let $u_0\in S$, $\eps\in(0,\tau(u_0)-1)$ and set $\tau_1=\tau(u_0)-\eps/2$, $\tau_2=\tau(u_0)+\eps/2$. Hence, by definition of $\tau$,
\[
\varphi(\tau_1 u_0)>\mu>\varphi(\tau_2 u_0).
\]
Being $\varphi$ continuous, we can find $\rho>0$ such that  $\overline B_\rho(\tau_1 u_0)\cup\overline B_\rho(\tau_2 u_0)\subset B_\eps(\tau(u_0)u_0)$ and $\varphi(u_1)>\mu>\varphi(u_2)$ for all $u_i\in \overline B_\rho(\tau_i u_0)$ ($i=1,2$). There exists $\delta>0$ such that  for all $u\in S\cap\overline B_\delta(u_0)$ we have $\tau_i u\in \overline B_\rho(\tau_i u_0)$ ($i=1,2$), so $\varphi(\tau_1u)>\mu>\varphi(\tau_2u)$. By the Claim, this in turn implies $\tau(u)\in(\tau_1,\tau_2)$, which rephrases as
\[|\tau(u)-\tau(u_0)|<\eps,\]
therefore $\tau$ is continuous.
\end{proof}

In view of the forthcoming results, we also need to recall some basic features of the {\em metric critical point theory} introduced by Degiovanni \& Marzocchi \cite{DM} (see also Degiovanni \cite{D} and Corvellec \cite{C3}). Assume that $(X,d)$ is a complete metric space and $\varphi:X\to\R$ is continuous. We define the {\em weak slope} of $\varphi$ at $u\in X$ as the supremum $|d\varphi|(u)$ of all numbers $\sigma\in\R$ for which there exist $\delta>0$ and a continuous mapping $H:[0,\delta]\times B_\delta(u)\to X$ such that
\[d(H(t,v),v)\le t, \quad \varphi(H(t,v))\le\varphi(v)-\sigma t \ \mbox{for all $(t,v)\in[0,\delta]\times B_\delta(u)$.}\]
Accordingly, we say that $u$ is a {\em (metric) critical point} of $\varphi$ if $|d\varphi|(u)=0$. We denote by $\tilde K(u)$ the set of such points and, for all $c\in\R$, we set
\[\tilde K_c(\varphi)=\left\{u\in\tilde K(\varphi):\,\varphi(u)=c\right\}.\]
It is well known that, if $\varphi$ is locally Lipschitz continuous, then $\tilde K_c(\varphi)\subseteq K_c(\varphi)$ for all $c\in\R$, while the reverse inclusion does not hold in general (see \cite[Theorem 3.9, Example 3.10]{D}). We say that $\varphi$ satisfies the {\em (metric) Palais-Smale condition} (shortly, $(\widetilde{PS})$) if, for any sequence $(u_n)_n$ in $X$ such that $(\varphi(u_n))_n$ is bounded and $|d\varphi|(u_n)\to 0$, $(u_n)_n$ admits a convergent subsequence.

Now we introduce a nonsmooth version of the {\em second deformation lemma} for locally Lipschitz continuous functionals, which was originally proved in the metric framework:

\begin{thm}\label{sdl}
If $\varphi:X\to\R$ is locally Lipschitz continuous and satisfies $(C)$, $a<b\leq \infty$ are such that  $K_a(\varphi)$ is a finite set, while $K_c(\varphi)=\emptyset$ for all $c\in(a,b)$, then there exists a continuous mapping $h:[0,1]\times(\overline\varphi^b\setminus K_b(\varphi))\to\overline(\overline\varphi^b\setminus K_b(\varphi))$ such that 
\begin{enumroman}
\item\label{sdl1} $h(0,u)=u$ and $h(1,u)\in\overline\varphi^a$ for all $u\in\overline\varphi^b\setminus K_b(\varphi)$;
\item\label{sdl2} $h(t,u)=u$ for all $(t,u)\in[0,1]\times\overline\varphi^a$;
\item\label{sdl3} $\varphi(h(t,u))\leq\varphi(u)$ for all $(t,u)\in[0,1]\times\overline\varphi^b\setminus K_b(\varphi)$.
\end{enumroman}
In particular, $\overline\varphi^a$ is a strong deformation retract of $(\overline\varphi^b\setminus K_b(\varphi))$
\end{thm}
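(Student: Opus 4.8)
The plan is to deduce the statement from the second deformation theorem of the metric critical point theory recalled above, and then to read the metric conclusion back in the locally Lipschitz framework. The device that connects the two settings is the pointwise inequality $|d\varphi|(u)\ge m(u)$ for all $u\in X$, valid for every locally Lipschitz continuous functional on the reflexive space $X$; this is the quantitative counterpart of the inclusion $\tilde K_c(\varphi)\subseteq K_c(\varphi)$ quoted above. To prove it, I would fix $u$ with $m(u)>0$, pick $u^*\in\partial\varphi(u)$ realizing the minimum in \eqref{m}, and use a Hahn--Banach/minimax argument on the convex weak$^*$ compact set $\partial\varphi(u)$ (Lemma \ref{gg}\ref{gg1}), which since $0\notin\partial\varphi(u)$ produces, for every $\sigma>0$, a unit vector $v$ with $\varphi^\circ(u;v)<-(m(u)-\sigma)$; the upper semicontinuity of $\partial\varphi$ (Lemma \ref{gg}\ref{gg2}) together with Lebourg's mean value theorem (Theorem \ref{mvt}) then makes this strict descent uniform on a small ball, so that $H(t,w)=w+tv$ is admissible in the definition of the weak slope and yields $|d\varphi|(u)\ge m(u)-\sigma$, whence the claim by letting $\sigma\to0$.

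Granting this inequality, the second step is to rephrase the hypotheses of the statement in metric terms. Since $|d\varphi|\ge m$, the assumption $K_c(\varphi)=\emptyset$ for $c\in(a,b)$ forces $|d\varphi|(u)>0$ whenever $\varphi(u)\in(a,b)$, i.e.\ $\varphi$ has no metric critical points in the interval $(a,b)$; moreover $\tilde K_a(\varphi)\subseteq K_a(\varphi)$ is finite. The same inequality transfers the compactness hypothesis: if $(u_n)_n$ satisfies $(1+\|u_n\|)|d\varphi|(u_n)\to0$ with $(\varphi(u_n))_n$ bounded, then $(1+\|u_n\|)m(u_n)\to0$ as well, so $(C)$ provides a convergent subsequence. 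Thus $\varphi$ obeys the metric Cerami condition, which is precisely the compactness needed to run the deformation, the Cerami weight being indispensable when $b=\infty$, where sequences of bounded energy may be unbounded in norm.

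With these metric hypotheses in force, I would invoke the second deformation theorem in the metric framework of Degiovanni--Marzocchi and Corvellec: on the noncritical interval $(a,b)$, with $\tilde K_a(\varphi)$ finite, the sublevel $\overline\varphi^a$ is a strong deformation retract of $\overline\varphi^b\setminus K_b(\varphi)$ through a homotopy $h$ along which $\varphi$ does not increase. Reading off the three listed properties is then immediate: \ref{sdl1} is the retraction together with $h(0,\cdot)=\mathrm{id}$, \ref{sdl2} is the ``strong'' character of the retraction, and \ref{sdl3} is the monotonicity of $\varphi$ along $h$, all of which are part of the conclusion of the metric theorem once the metric Cerami condition and the noncritical interval condition have been verified.

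The main obstacle is not the bridge inequality but the construction hidden inside the metric deformation theorem, namely the behavior of $h$ near the finite set $K_a(\varphi)$: the flow must carry every point of $\overline\varphi^b\setminus K_b(\varphi)$ down to the exact level $a$ while remaining continuous, which requires driving trajectories whose energy tends to $a$ into $\overline\varphi^a$ in finite reparametrized time without letting them accumulate on $K_a(\varphi)$. This is exactly where the metric Cerami condition is used, to keep $|d\varphi|$ bounded away from zero on strips $\{a\le\varphi\le c\}$ deprived of small balls around $K_a(\varphi)$, and where the case $b=\infty$ must be treated by exhausting $\overline\varphi^b$ with the sublevels $\overline\varphi^c$, $c<b$, and gluing the resulting partial deformations.
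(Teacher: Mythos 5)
Your overall skeleton is the same as the paper's: bridge the Clarke and metric theories via the slope inequality $|d\varphi|(u)\ge m(u)$ (which gives $\tilde K_c(\varphi)\subseteq K_c(\varphi)$ and transfers the critical-set hypotheses), then quote the metric second deformation theorem of Degiovanni--Marzocchi/Corvellec and read the conclusion back. Your sketch of the bridge inequality (separation on the convex weak$^*$ compact set $\partial\varphi(u)$ to get a descent direction, made uniform by Lebourg's theorem, then $H(t,w)=w+tv$) is fine. But there are two genuine gaps. First, the compactness transfer: the metric theorem you invoke (\cite[Theorem 5.3]{C3}) is stated under the metric Palais--Smale condition $(\widetilde{PS})$ in the ambient metric space, and in the \emph{norm} metric the hypothesis $(C)$ does not yield $(\widetilde{PS})$ --- a sequence with $|d\varphi|(u_n)\to 0$ and bounded energy need not be a $(C)$-sequence when $\|u_n\|\to\infty$. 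Your ``metric Cerami condition'' (with the weight $1+\|u_n\|$) is not the hypothesis of any of the quoted theorems, and your closing paragraph, which proposes to re-run the internal construction of the deformation with this weight and to glue deformations over an exhaustion when $b=\infty$, is precisely the hard content that cannot be left as a gesture. The paper's fix is different and clean: replace the norm metric by the Cerami metric
\[d_C(u,v)=\inf_{\gamma}\int_0^1\frac{\|\gamma'(t)\|}{1+\|\gamma(t)\|}\,dt,\]
which induces the same topology, and for which $(\widetilde{PS})$-sequences in $(X,d_C)$ coincide with $(C)$-sequences in $(X,\|\cdot\|)$ (Corvellec \cite[Remark 4.2]{C3}); then \cite[Theorem 5.3]{C3} applies verbatim in $(X,d_C)$, and the resulting homotopy is continuous for the original topology. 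You should either adopt this change-of-metric device or supply a full proof of a Cerami-type metric deformation theorem; as written, the invocation is unjustified.

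Second, the level-$b$ mismatch: when $b<\infty$, the metric theorem deforms the \emph{larger} set $\overline\varphi^b\setminus\tilde K_b(\varphi)$, whereas the statement concerns $\overline\varphi^b\setminus K_b(\varphi)$, and $K_b(\varphi)\setminus\tilde K_b(\varphi)$ may be nonempty. It is not automatic that the homotopy, restricted to $\overline\varphi^b\setminus K_b(\varphi)$, takes values in that set: one must rule out trajectories entering $K_b(\varphi)\setminus\tilde K_b(\varphi)$. The paper closes this with a short argument using the strict-decrease property of the metric deformation: if $h(t,u)\in K_b(\varphi)\setminus\tilde K_b(\varphi)$ then $\varphi(h(t,u))=b=\varphi(u)$, so the property ``$h(t,u)\neq u$ implies $\varphi(h(t,u))<\varphi(u)$'' forces $h(t,u)=u$, whence $u\in K_b(\varphi)$, a contradiction. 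Your proposal never addresses this point, so even granting the metric theorem, the restriction step is missing.
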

\begin{proof}
We define on $X$ the so-called {\em Cerami metric} by setting for all $u,v\in X$
\[d_C(u,v)=\inf_{\gamma\in\Gamma_{u,v}}\int_0^1\frac{\|\gamma'(t)\|}{1+\|\gamma(t)\|}\,dt,\]
where $\Gamma_{u,v}$ denotes the set of all piecewise $C^1$ paths joining $u$ and $v$. The metric $d_C$ induces the same topology as $\|\cdot\|$, while $\widetilde{(PS)}$-sequences in $(X,d_C)$ coincide with $(C)$-sequences in $(X,\|\cdot\|)$ (see Corvellec \cite[Remark 4.2]{C3}). So, $\varphi$ is continuous and satisfies $\widetilde{(PS)}$ in $(X,d_C)$.

Moreover, by what observed above, $\tilde K_a(\varphi)$ is at most a finite set and $\tilde K_c(\varphi)=\emptyset$ for all $c\in(a,b)$. By \cite[Theorem 5.3]{C3}, there exists a deformation $h:[0,1]\times(\overline\varphi^b\setminus\tilde K_b(\varphi))\to(\overline\varphi^b\setminus\tilde K_b(\varphi))$ such that, for all $(t,u)\in [0,1]\times(\overline\varphi^b\setminus\tilde K_b(\varphi))$, the following conditions hold:
\begin{enumerate}[label=(\alph*)]
\item\label{msdl1} if $h(t,u)\neq u$, then $\varphi(h(t,u))<\varphi(u)$;
\item\label{msdl2} if $\varphi(u)\le a$, then $h(t,u)=u$;
\item\label{msdl3} if $\varphi(u)>a$, then $\varphi(h(1,u))=a$.
\end{enumerate}
We deduce that for all $(t,u)\in[0,1]\times(\overline\varphi^b\setminus K_b(\varphi))$ we have $h(t,u)\in\overline\varphi^b\setminus K_b(\varphi)$. Otherwise, there would exist $(t,u) \in [0,1]\times(\overline\varphi^b\setminus K_b(\varphi))$ with $h(t,u)\in K_b(\varphi)\setminus\tilde K_b(\varphi)$; in particular, we would have $\varphi(h(t,u))=b=\varphi(u)$, so that, by \ref{msdl1}, $h(t,u)=u$. Hence $u\in K_b(\varphi)$, a contradiction. In conclusion, we can restrict $h$ to a deformation of $\overline\varphi^b\setminus K_b(\varphi)$ and we easily see that it satisfies \ref{sdl1}-\ref{sdl3}.
\end{proof}

\section{Preliminaries II: nonsmooth Morse theory}\label{sec3}

In this section we discuss nonsmooth Morse theory. The main ideas are essentially contained in the paper of Corvellec \cite{C}, however our definitions slightly differ from those given therein, so we will provide some details. We refer to Motreanu, Motreanu $\&$ Papageorgiou \cite[Chapter 6]{MMP} for the properties of singular homology.

For all $B\subseteq A\subseteq X$, $H_k(A,B)$ denotes the $k$-th singular homology group of a topological pair $(A,B)$ (we choose $\R$ as the ring of coefficients, so $H_k(A,B)$ is a real linear space). Throughout this section we assume that $\varphi:X\to\R$ is locally Lipschitz continuous and satisfies $(C)$. Let $u\in K_c(\varphi)$ ($c\in\R$) be an {\em isolated} critical point, i.e., there exists a neighborhood  $U\subset X$ of $u$ such that  $K(\varphi)\cap U=\{u\}$. For all $k\in\N_0$ we define the $k$-{\em th critical group of $\varphi$ at $u$} as
\[C_k(\varphi,u)=H_k(\overline\varphi^c\cap U,\overline\varphi^c\cap U\setminus\{u\}).\]
Due to Theorem \ref{sdl} and the excision property of singular homology groups, $C_k(\varphi,u)$ is independent of the choice of $U$. We recall now a decomposition result for singular homology groups of sublevel sets of $\varphi$:

\begin{lem}\label{dec}
If $a<b\le\infty$, $c\in(a,b)$ is the only critical value of $\varphi$ in $[a,b]$, and $K_c(\varphi)$ is a finite set, then for all $k\in\N_0$
\[H_k(\overline\varphi^b,\overline\varphi^a)=\bigoplus_{u\in K_c(\varphi)}C_k(\varphi,u).\]
\end{lem}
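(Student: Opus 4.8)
The plan is to reduce the global statement to a sum of local contributions by isolating each critical point and peeling off the level $c$ in two stages. First I would invoke Theorem \ref{sdl} to handle the passage from the sublevel $\overline\varphi^b$ down to a neighborhood of the critical level $c$: since $K_{c'}(\varphi)=\emptyset$ for all $c'\in(c,b)$, the set $\overline\varphi^c$ (or $\overline\varphi^{c'}$ for $c<c'<b$) is a strong deformation retract of $\overline\varphi^b\setminus K_b(\varphi)$, so by homotopy invariance the relative homology $H_k(\overline\varphi^b,\overline\varphi^a)$ is unchanged if we replace $\overline\varphi^b$ by $\overline\varphi^{c'}$ for any $c'\in(c,b)$ slightly above $c$. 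The point is to arrive at a pair $(\overline\varphi^{c'},\overline\varphi^a)$ in which the only critical value strictly between the two levels is $c$ itself, and all critical points at level $c$ are interior.

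Next I would excise. Writing $K_c(\varphi)=\{u_1,\dots,u_m\}$ (finite by hypothesis), choose pairwise disjoint open neighborhoods $U_i$ of the $u_i$ small enough that $K(\varphi)\cap U_i=\{u_i\}$, which is possible because each $u_i$ is isolated in $K(\varphi)$ (there are no other critical points at level $c$ and none in the open interval $(a,c)\cup(c,c')$, and away from level $c$ compactness from $(C)$ keeps critical points away). On the complement of $\bigcup_i U_i$ the level set carries no critical points, so a second application of the deformation machinery of Theorem \ref{sdl}, now deforming $\overline\varphi^{c'}$ down to $\overline\varphi^{c-\eps}$ off the neighborhoods, shows
\[
H_k(\overline\varphi^{c'},\overline\varphi^a)\cong H_k\Big(\overline\varphi^{c-\eps}\cup\textstyle\bigcup_i U_i,\,\overline\varphi^{c-\eps}\Big).
\]
By the excision property of singular homology applied to the disjoint union, this decomposes as the direct sum over $i$ of the local pairs, and each summand is by definition (together with the $U$-independence noted after the definition of the critical groups, again a consequence of Theorem \ref{sdl} and excision) exactly $C_k(\varphi,u_i)=H_k(\overline\varphi^c\cap U_i,\overline\varphi^c\cap U_i\setminus\{u_i\})$.

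The main obstacle I anticipate is the bookkeeping of the nonsmooth second deformation lemma: Theorem \ref{sdl} as stated deforms $\overline\varphi^b\setminus K_b(\varphi)$ onto $\overline\varphi^a$, and one must be careful with the removal of $K_b(\varphi)$ when $b$ is finite (and with the convention $\overline\varphi^\infty=X$ when $b=\infty$). Threading the three-stage retraction (from $b$ to just above $c$, then localizing around the $u_i$, then collapsing the complement to $c-\eps$) so that every pair stays within the domain on which the deformation is defined, and verifying that the retractions indeed preserve the sublevel structure needed for excision, is the delicate part; the algebraic-topological steps (homotopy invariance, excision, direct-sum over components) are then routine once the correct topological pairs are set up.
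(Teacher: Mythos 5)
Your overall skeleton (deform to isolate the level $c$, then excise, then take a direct sum over the $u_i$) matches the paper's, but the middle step of your plan is not justified by the tools available here, and this is a genuine gap. The claimed isomorphism
\[
H_k(\overline\varphi^{c'},\overline\varphi^{a})\cong H_k\Big(\overline\varphi^{c-\eps}\cup\bigcup_i U_i,\,\overline\varphi^{c-\eps}\Big)
\]
requires deforming $\overline\varphi^{c'}$ onto $\overline\varphi^{c-\eps}\cup\bigcup_i U_i$, i.e.\ a \emph{localized} deformation lemma in which the flow is stopped inside prescribed neighborhoods of the critical points. Theorem \ref{sdl} does not provide this: it only retracts a full sublevel set (with the top-level critical set removed) onto a lower full sublevel set, and it cannot be applied across the interval $(c-\eps,c')$, since the critical value $c$ lies inside it. In the smooth setting one builds such a deformation from a pseudo-gradient flow stopped near the critical set; in the locally Lipschitz/metric framework it is a separate, nontrivial result that the paper never states, and no amount of threading Theorem \ref{sdl} yields it. Relatedly, your final identification is not ``by definition'': after excision your local pairs have the shape $H_k(\overline\varphi^{c'}\cap U_i,\overline\varphi^{c-\eps}\cap U_i)$ (roughly), whereas $C_k(\varphi,u_i)$ is defined via the pair $(\overline\varphi^{c}\cap U_i,\overline\varphi^{c}\cap U_i\setminus\{u_i\})$; the $U$-independence remark covers only the choice of $U$ in the latter pair, not the passage between these two different shapes, which would again require local deformations.

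The paper's route avoids all of this by exploiting precisely the feature of Theorem \ref{sdl} that you did not use: applied on $[a,c]$, the theorem tolerates critical points \emph{at the top level} by removing them, so $\overline\varphi^a$ is a strong deformation retract of $\overline\varphi^c\setminus K_c(\varphi)$; applied on $[c,b]$ (where $K_b(\varphi)=\emptyset$), it makes $\overline\varphi^c$ a strong deformation retract of $\overline\varphi^b$. Together these give directly
\[
H_k(\overline\varphi^b,\overline\varphi^a)\cong H_k\big(\overline\varphi^c,\overline\varphi^c\setminus K_c(\varphi)\big),
\]
and excising the complement of pairwise disjoint closed neighborhoods $U_i$ of the $u_i$ then produces exactly the defining pairs of the critical groups, with no intermediate levels $c'$, $c-\eps$ and no localized deformation needed. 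If you restructure your argument along these lines, the steps you flagged as delicate disappear entirely.
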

\begin{proof}
Assume that $K_c(\varphi)=\{u_1,\ldots u_n\}$. We can find pairwise disjoint closed neigborhoods $U_1,\ldots U_n\subset X$ of $u_1,\ldots u_n$, respectively, and set $U=\bigcup_{i=1}^n U_i$. By Theorem \ref{sdl}, $\overline\varphi^a$ and $\overline\varphi^c$ are strong deformation retracts of $\overline\varphi^c\setminus\{u_1,\ldots u_n\}$ and $\overline\varphi^b$, respectively. By the excision property and \cite[Corollary 6.15, Proposition 6.18]{MMP}, we have
\begin{align*}
H_k(\overline\varphi^b,\overline\varphi^a) &= H_k(\overline\varphi^c,\overline\varphi^c\setminus\{u_1,\ldots u_n\}) = H_k(\overline\varphi^c\cap U,(\overline\varphi^c\cap U)\setminus\{u_1,\ldots u_n\}) \\
&= \bigoplus_{i=1}^n H_k(\overline\varphi^c\cap U_i,(\overline\varphi^c\cap U_i)\setminus\{u_i\}) = \bigoplus_{i=1}^n C_k(\varphi,u_i),
\end{align*}
which proves our assertion.
\end{proof}

Now assume that $\inf_{K(\varphi)}\varphi>c$ for some $c\in\R$. For all $k\in\N_0$ we define the $k$-{\em th critical group of $\varphi$ at infinity} as
\[C_k(\varphi,\infty)=H_k(X,\overline\varphi^c).\]
By Theorem \ref{sdl} and \cite[Corollary 6.15]{MMP}, $C_k(\varphi,\infty)$ is invariant with respect to $c<\inf_{K(\varphi)}\varphi$.

Henceforth, we will assume that $K(\varphi)$ is a {\em finite} set (in particular, all critical points are isolated). Now let $a<b$ be real numbers such that  $\varphi(u)\notin\{a,b\}$ for all $u\in K(\varphi)$. For all $k\in\N_0$ we define the $k$-{\em th Morse type number} and the $k$-{\em th Betti type number} of the interval $[a,b]$ as
\[M_k(a,b)=\sum_{u\in K(\varphi)\cap\varphi^{-1}([a,b])}{\rm dim}\,C_k(\varphi,u), \quad \beta_k(a,b)={\rm dim}\,H_k(\overline\varphi^b,\overline\varphi^a),\]
respectively. If $a<\varphi(u)<b$ for all $u\in K(\varphi)$, then again by Theorem \ref{sdl} and \cite[Corollary 6.15]{MMP} we have
\begin{equation}\label{betti}
\beta_k(a,b)={\rm dim}\,C_k(\varphi,\infty) \ \mbox{for all $k\in\N_0$.}
\end{equation}
Accordingly, we define two formal power series in the variable $t$, the {\em Morse polynomial} and the {\em Poincar\'e polynomial} as
\[M(a,b;t)=\sum_{k=0}^\infty M_k(a,b)t^k, \quad P(a,b;t)=\sum_{k=0}^\infty \beta_k(a,b)t^k,\]
respectively. The following identity, known as the nonsmooth {\em Morse relation}, will be the key tool in our study:

\begin{thm}\label{mr}
If $a<b$ are real numbers such that $\varphi(u)\in(a,b)$ for all $u\in K(\varphi)$, $M_k(a,b)<\infty$ for all $k\in\N_0$, and $M_k(a,b)=0$ for $k$ big enough, then there exists a polynomial with non-negative integer coefficients $Q(t)$ such that 
\[M(a,b;t)=P(a,b;t)+(1+t)Q(t) \ \mbox{for all $t\in\R$.}\]
\end{thm}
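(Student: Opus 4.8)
The plan is to reduce the statement to a homological-algebra computation, since the nonsmooth content has already been encapsulated in the decomposition Lemma~\ref{dec} (which itself rests on the second deformation Theorem~\ref{sdl}). Because $K(\varphi)$ is finite, $\varphi$ has only finitely many critical values $c_1<c_2<\cdots<c_m$, all lying in $(a,b)$ by hypothesis. First I would interpolate values
\[a=a_0<a_1<\cdots<a_m=b,\]
choosing $a_i\in(c_i,c_{i+1})$ for $1\le i\le m-1$, so that each interval $[a_{i-1},a_i]$ contains the single critical value $c_i$ and none at its endpoints. Writing $B_i=\overline\varphi^{a_i}$ and denoting by $P(D,E;t)=\sum_k{\rm dim}\,H_k(D,E)\,t^k$ the Poincar\'e polynomial of a topological pair $E\subseteq D$, Lemma~\ref{dec} applied on $[a_{i-1},a_i]$ gives $H_k(B_i,B_{i-1})=\bigoplus_{u\in K_{c_i}(\varphi)}C_k(\varphi,u)$, whence, summing over $i$,
\[\sum_{i=1}^m P(B_i,B_{i-1};t)=\sum_{u\in K(\varphi)}\sum_{k}{\rm dim}\,C_k(\varphi,u)\,t^k=M(a,b;t).\]

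The core is an algebraic subadditivity estimate for a triple $A\subseteq B\subseteq C$ of spaces. Denoting by $\alpha_k,\beta_k,\gamma_k$ the dimensions of $H_k(B,A),H_k(C,A),H_k(C,B)$ and by $c_k={\rm rank}\,\partial_k$ the rank of the connecting homomorphism $\partial_k\colon H_k(C,B)\to H_{k-1}(B,A)$, the long exact homology sequence of the triple, read off degree by degree via the rank–nullity relations at each term, yields the identity $\alpha_k+\gamma_k-\beta_k=c_k+c_{k+1}$. Multiplying by $t^k$ and summing, and using $c_0=0$ (since $H_{-1}=0$) together with $c_k=0$ for $k$ large, the right-hand side collapses to $(1+t)Q_0(t)$ with $Q_0(t)=\sum_k c_{k+1}t^k$, so that
\[P(B,A;t)+P(C,B;t)=P(C,A;t)+(1+t)Q_0(t),\]
where $Q_0$ has non-negative integer coefficients. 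To make the rank count legitimate I would first verify that every group in sight is finite-dimensional: this propagates along the filtration by induction, starting from the finiteness of each $H_k(B_i,B_{i-1})$ (guaranteed by $M_k(a,b)<\infty$) and passing through the exact sequences.

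Then I would telescope. An induction on $i$, applying the subadditivity estimate to the triples $B_0\subseteq B_{i-1}\subseteq B_i$, shows that
\[\sum_{i=1}^m P(B_i,B_{i-1};t)=P(B_m,B_0;t)+(1+t)Q(t)\]
for a polynomial $Q$ with non-negative integer coefficients, each step contributing a further non-negative summand. Since $P(B_m,B_0;t)=P(\overline\varphi^b,\overline\varphi^a;t)=P(a,b;t)$ by the very definition of the Betti type numbers $\beta_k(a,b)$, comparison with the first display gives $M(a,b;t)=P(a,b;t)+(1+t)Q(t)$. The assumption $M_k(a,b)=0$ for $k$ big enough ensures all these power series terminate, so $M$, $P$ and $Q$ are genuine polynomials.

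I expect the only real difficulty to be bookkeeping rather than conceptual: propagating finite-dimensionality so that ranks and Poincar\'e polynomials are well defined at every stage, and tracking the non-negativity of the coefficients of $Q$ through the telescoping. No new nonsmooth ingredient enters beyond Lemma~\ref{dec}; once the critical groups and the sublevel deformations furnished by Theorem~\ref{sdl} are in hand, the argument is formally identical to the classical smooth Morse relation.
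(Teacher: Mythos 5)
Your proposal is correct and follows essentially the same route as the paper's proof: interpolate regular values between the finitely many critical values, apply Lemma~\ref{dec} to identify each relative homology group $H_k(\overline\varphi^{a_i},\overline\varphi^{a_{i-1}})$ with the direct sum of critical groups, and then conclude by the standard homological algebra of filtrations. The only difference is one of self-containedness: where the paper simply cites \cite[Lemma 6.56]{MMP} for the subadditivity and the polynomial identity, you unpack that lemma explicitly (the rank--nullity count $\alpha_k+\gamma_k-\beta_k=c_k+c_{k+1}$ in the long exact sequence of a triple, followed by telescoping, with finite-dimensionality propagated inductively along the filtration), which is exactly the argument behind the cited result.
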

\begin{proof}
We first prove that for all $k\in\N_0$
\begin{equation}\label{morbet}
\beta_k(a,b)\le M_k(a,b).
\end{equation}
Indeed, assume that $c_1, \dots, c_m$, with $a<c_1<\ldots <c_m<b$, are the critical values of $\varphi$ in $[a,b]$. We fix $m+1$ real numbers $d_0, \dots,d_m$, such that  $a=d_0<c_1<d_1<c_2<\ldots<c_m<d_m=b$. By \cite[Lemma 6.56 (a)]{MMP} and Lemma \ref{dec}, we have for all $k\in\N_0$
\[{\rm dim}\, H_k(\overline\varphi^b,\overline\varphi^a) \le \sum_{j=1}^m {\rm dim}\,H_k(\overline\varphi^{d_j},\overline\varphi^{d_{j-1}}) = \sum_{j=1}^m\sum_{u\in K_{c_j}(\varphi)} {\rm dim}\,C_k(\varphi,u),\]
which yields \eqref{morbet}. In turn, \eqref{morbet} implies that $\beta_k(a,b)$ is finite and vanishes for $k$ big enough. Now, again by Lemma \ref{dec} and \cite[Lemma 6.56 (b)]{MMP} we have for all $t\in\R$
\[\sum_{k=0}^\infty M_k(a,b)t^k = \sum_{j=1}^m\sum_{k=0}^\infty \beta_k(d_j,d_{j
-1})t^k = \sum_{k=0}^\infty \beta_k(a,b)t^k+(1+t)Q(t)\]
for a convenient polynomial $Q$ with coefficients in $\N_0$. This concludes the proof.
\end{proof}

If in the Morse relation we choose $t=-1$, we obtain the nonsmooth {\em Poincar\'e-Hopf formula}:
\begin{equation}\label{ph}
\sum_{k=0}^\infty (-1)^k M_k(a,b)=\sum_{k=0}^\infty (-1)^k \beta_k(a,b).
\end{equation}

\section{Constant sign solutions}\label{sec4}

We provide problem \eqref{prob} with a variational formulation. Our study involves two function spaces, $W^{1,p}(\Omega)$  endowed with the norm $\|\cdot\|=(\|\nabla(\cdot)\|_p^p+\|\cdot\|_p^p)^{1/p}$ and $(C^1(\overline\Omega),\|\cdot\|_{C^1})$. The dual space of $W^{1,p}(\Omega)$ is $(W^*,\|\cdot\|_*)$. Moreover, for all $t\in[1,\infty]$ we denote by $\|\cdot\|_t$ the norm of $L^t(\Omega)$. Both $W^{1,p}(\Omega)$ and $C^1(\overline\Omega)$ are ordered Banach spaces with positive cones $W_+$ and $C_+$, respectively. We note that ${\rm int}(W_+)=\emptyset$, while
\[{\rm int}(C_+)=\left\{u\in C^1(\overline\Omega) \ : \ u(x)>0 \ \mbox{for all $x\in\overline\Omega$}\right\}.\]
By $A:W^{1,p}(\Omega)\to W^*$ we denote the $p$-Laplacian operator, i.e.
\[\langle A(u),v\rangle=\int_\Omega|\nabla u|^{p-2}\nabla u\cdot\nabla v\,dx \quad \mbox{for all $u,v\in W^{1,p}(\Omega)$.}\]

Now, we define some functionals on $W^{1,p}(\Omega)$. We denote by $s^\pm=\max\{\pm s,0\}$, and for all $(x,s)\in\Omega\times\R$ we set $j_\pm(x,s)=j(x,\pm s^\pm)$ (note that $j_\pm(x,\cdot)$ is locally Lipschitz continuous for a.a. $x\in\Omega$), and we fix $\eps>0$. For all $u\in W^{1,p}(\Omega)$ we define
\[\varphi(u)=\frac{\|\nabla u\|_p^p}{p}-\int_\Omega j(x,u)\,dx,\quad \varphi^\varepsilon_\pm(u)=\frac{\|\nabla u\|_p^p}{p}+\frac{\eps\|u^\mp\|_p^p}{p}-\int_\Omega j_\pm(x,u)\,dx\]
(note that $\varphi$ and $\varphi^\eps_\pm$ agree on $\pm W_+$). Moreover, for all $u\in L^t(\Omega)$ we set
\[N(u)=\left\{w\in L^{t'}(\Omega) \ : \ w\in\partial j(x,u) \ \mbox{a.e. in $\Omega$}\right\}\]
and
\[N_\pm(u)=\left\{w\in L^{t'}(\Omega) \ : \ w\in\partial j_{\pm}(x,u) \ \mbox{a.e. in $\Omega$}\right\}.\]
Finally, $m(u)$ and $m^\eps_\pm(u)$ are defined according to \eqref{m} for $\varphi$ and $\varphi^\varepsilon_\pm$, respectively.

Following a consolidated literature (see for instance Carl, Le \& Motreanu \cite{CLM}), we say that $u$ is a {\em (smooth weak) solution} of \eqref{prob} if $u\in C^1(\overline\Omega)$ and there exists $w\in N(u)$ such that
\[A(u)=w \ \mbox{in $W^*$.}\]

A fundamental property of the operator $A$ (see \cite[Proposition 2.72]{MMP}) is:
\begin{lem}\label{s+}
The mapping $A:W^{1,p}(\Omega)\to W^*$ is continuous and has the $(S)_+$ property, i.e., if $(u_n)_n$ is a sequence in $W^{1,p}(\Omega)$ such that  $u_n\rightharpoonup u$ in $W^{1,p}(\Omega)$ and
\[\limsup _{n\to \infty} \langle A(u_n),u_n-u\rangle\leq 0,\]
then $u_n\to u$ in $W^{1,p}(\Omega)$.
\end{lem}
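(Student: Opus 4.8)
The plan is to establish the two assertions separately, the continuity being routine and the $(S)_+$ property constituting the substance of the lemma. For continuity, I would note that if $u_n\to u$ in $W^{1,p}(\Omega)$, then $\nabla u_n\to\nabla u$ in $L^p(\Omega;\R^N)$, and since the map $\xi\mapsto|\xi|^{p-2}\xi$ induces a continuous Nemytskii operator from $L^p(\Omega;\R^N)$ into $L^{p'}(\Omega;\R^N)$ (with $p'=p/(p-1)$), we get $|\nabla u_n|^{p-2}\nabla u_n\to|\nabla u|^{p-2}\nabla u$ in $L^{p'}$; testing against arbitrary $v\in W^{1,p}(\Omega)$ and applying H\"older then yields $\|A(u_n)-A(u)\|_*\to 0$.

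For the $(S)_+$ property, suppose $u_n\rightharpoonup u$ in $W^{1,p}(\Omega)$ with $\limsup_n\langle A(u_n),u_n-u\rangle\le 0$. The first step is to observe that, since $A(u)\in W^*$ is a fixed functional and $u_n-u\rightharpoonup 0$, we have $\langle A(u),u_n-u\rangle\to 0$; subtracting, $\limsup_n\langle A(u_n)-A(u),u_n-u\rangle\le 0$. On the other hand, the pointwise monotonicity inequality $(|\xi|^{p-2}\xi-|\eta|^{p-2}\eta)\cdot(\xi-\eta)\ge 0$ forces $\langle A(u_n)-A(u),u_n-u\rangle\ge 0$ for every $n$; hence the limit exists and vanishes, that is,
\[\lim_{n\to\infty}\int_\Omega\big(|\nabla u_n|^{p-2}\nabla u_n-|\nabla u|^{p-2}\nabla u\big)\cdot(\nabla u_n-\nabla u)\,dx=0.\]

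The core of the argument is then to upgrade the vanishing of this nonnegative integral to strong convergence of the gradients, distinguishing two cases according to the degeneracy of the operator. For $p\ge 2$, the elementary inequality $(|\xi|^{p-2}\xi-|\eta|^{p-2}\eta)\cdot(\xi-\eta)\ge c_p|\xi-\eta|^p$ gives $\|\nabla u_n-\nabla u\|_p^p\to 0$ at once. For $1<p<2$ one only has the weaker bound $(|\xi|^{p-2}\xi-|\eta|^{p-2}\eta)\cdot(\xi-\eta)\ge c_p|\xi-\eta|^2(|\xi|+|\eta|)^{p-2}$; writing $|\xi-\eta|^p$ as $\big(|\xi-\eta|^2(|\xi|+|\eta|)^{p-2}\big)^{p/2}(|\xi|+|\eta|)^{p(2-p)/2}$, integrating, and applying H\"older with exponents $2/p$ and $2/(2-p)$, one bounds $\|\nabla u_n-\nabla u\|_p^p$ by the product of a factor controlled by the integral above (hence tending to zero) and a factor dominated by $(\|\nabla u_n\|_p+\|\nabla u\|_p)^{p(2-p)/2}$, which stays bounded because weakly convergent sequences are bounded. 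In either case $\nabla u_n\to\nabla u$ in $L^p(\Omega;\R^N)$.

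Finally, since the embedding $W^{1,p}(\Omega)\hookrightarrow L^p(\Omega)$ is compact by Rellich--Kondrachov, the weak convergence $u_n\rightharpoonup u$ yields $u_n\to u$ in $L^p(\Omega)$; combined with the gradient convergence and the definition $\|u\|=(\|\nabla u\|_p^p+\|u\|_p^p)^{1/p}$, this gives $u_n\to u$ in $W^{1,p}(\Omega)$. I expect the singular range $1<p<2$ to be the main technical obstacle, since there the monotonicity is no longer uniformly $p$-coercive and the H\"older splitting must be arranged precisely so that one of the two factors coincides with the vanishing integral; by contrast, the degenerate case $p\ge 2$ and the continuity statement are straightforward.
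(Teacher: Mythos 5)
Your proof is correct, but note that the paper itself offers no internal argument for this lemma: it is stated as a known fact with a pointer to \cite[Proposition 2.72]{MMP}, so the only comparison available is with the cited textbook proof. Your argument is essentially the standard self-contained one: continuity via the Krasnoselskii theorem on Nemytskii operators (the growth $\bigl||\xi|^{p-2}\xi\bigr|=|\xi|^{p-1}$ with $(p-1)p'=p$ makes this work), then monotonicity plus the hypothesis to get $\langle A(u_n)-A(u),u_n-u\rangle\to 0$, then the two Simon-type inequalities to upgrade this to $\nabla u_n\to\nabla u$ in $L^p$, and finally Rellich--Kondrachov (valid here since $\Omega$ is bounded with $C^2$ boundary) to handle the $L^p$-part of the norm. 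All steps check out, including the H\"older splitting with exponents $2/p$ and $2/(2-p)$ in the singular range $1<p<2$; the only point deserving a word of care there is the set where $\nabla u_n=\nabla u=0$, on which the factor $(|\nabla u_n|+|\nabla u|)^{p-2}$ is undefined --- one restricts the integrals to the complement of that set (the excluded set contributes nothing to $\|\nabla u_n-\nabla u\|_p^p$), which is routine but should be said. For comparison, a frequently used alternative route (and a common textbook variant of the cited proposition) avoids the case distinction entirely: from $\langle A(u_n),u_n-u\rangle\to 0$ one deduces $\|\nabla u_n\|_p\to\|\nabla u\|_p$ together with $\nabla u_n\rightharpoonup\nabla u$ in $L^p(\Omega;\R^N)$, and concludes by the Kadec--Klee (Radon--Riesz) property of the uniformly convex space $L^p$. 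That route is shorter and uniform in $p$; yours is more quantitative, giving an explicit rate-type bound on $\|\nabla u_n-\nabla u\|_p$ in terms of the vanishing monotonicity integral, at the price of the dichotomy between the degenerate and singular regimes.
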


The functional $\varphi$ is the nonsmooth energy of problem \eqref{prob}:
\begin{lem}\label{var1}
If hypotheses {\bf H} hold, then $\varphi:W^{1,p}(\Omega)\to\R$ is locally Lipschitz continuous and satisfies $(C)$. Moreover, if $u\in K(\varphi)$, then $u\in C^1(\overline\Omega)$ is a solution of \eqref{prob}.
\end{lem}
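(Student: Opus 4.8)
The plan is to split $\varphi=\Phi-J$ with $\Phi(u)=\frac{1}{p}\|\nabla u\|_p^p$ and $J(u)=\int_\Omega j(x,u)\,dx$, and to establish the three assertions in turn. For local Lipschitz continuity I would recall that $\Phi\in C^1(W^{1,p}(\Omega))$ with $\Phi'=A$; for $J$, the growth bound {\bf H}\ref{h1} together with Lebourg's mean value theorem (Theorem \ref{mvt}) gives $|j(x,s_1)-j(x,s_2)|\le a_0(1+|s_1|^{r-1}+|s_2|^{r-1})|s_1-s_2|$, and since $r\in(p,p^*)$ the embedding $W^{1,p}(\Omega)\hookrightarrow L^r(\Omega)$ is continuous (indeed compact), so $J$ is locally Lipschitz on $W^{1,p}(\Omega)$ with $\partial J(u)\subseteq N(u)$. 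By Lemma \ref{gg}\,\ref{gg3}--\ref{gg5} I then record the inclusion $\partial\varphi(u)\subseteq A(u)-N(u)$, which will be used throughout.

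The core of the proof, and the main obstacle, is the boundedness of Cerami sequences. Given $(u_n)_n$ with $(\varphi(u_n))_n$ bounded and $(1+\|u_n\|)m(u_n)\to 0$, I would pick $u_n^*\in\partial\varphi(u_n)$ with $\|u_n^*\|_*=m(u_n)$ and write $u_n^*=A(u_n)-w_n$, $w_n\in N(u_n)$. The identity $p\varphi(u_n)-\langle u_n^*,u_n\rangle=\int_\Omega\big(w_n u_n-p\,j(x,u_n)\big)\,dx$ shows the left-hand side is bounded, so {\bf H}\ref{h3} (which gives $w_n u_n-p\,j(x,u_n)\ge c|u_n|^q$ for $|u_n|$ large) yields a bound for $(u_n)_n$ in $L^q(\Omega)$. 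Upgrading this to a $W^{1,p}$ bound is the delicate point: testing $u_n^*$ against $u_n$ gives $\|\nabla u_n\|_p^p\le\int_\Omega w_n u_n\,dx+o(1)\le c(1+\|u_n\|_r^r)$ by {\bf H}\ref{h1}, and interpolating $\|u_n\|_r\le\|u_n\|_q^{1-\theta}\|u_n\|_{p^*}^\theta$ followed by the Sobolev estimate $\|u_n\|_{p^*}\le c\|u_n\|$ produces the exponent $r\theta$. A direct computation shows that the lower bound $q>(r-p)\max\{N/p,1\}$ in {\bf H}\ref{h3} is precisely what guarantees $r\theta<p$; combined with the $L^q$ bound and the Poincar\'e--Wirtinger inequality (to control the zero-order part of the Neumann norm), the superlinear term is absorbed and $(u_n)_n$ is bounded in $W^{1,p}(\Omega)$.

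Convergence then follows from the $(S)_+$ property. Up to a subsequence $u_n\rightharpoonup u$ in $W^{1,p}(\Omega)$ and, by compactness of the embedding into $L^r(\Omega)$, $u_n\to u$ in $L^r(\Omega)$; since $(w_n)_n$ is bounded in $L^{r'}(\Omega)$ by {\bf H}\ref{h1}, one has $\int_\Omega w_n(u_n-u)\,dx\to 0$, while $\langle u_n^*,u_n-u\rangle\to 0$. Hence $\limsup_n\langle A(u_n),u_n-u\rangle\le 0$, and Lemma \ref{s+} gives $u_n\to u$ in $W^{1,p}(\Omega)$, proving $(C)$.

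Finally, for the regularity assertion, if $u\in K(\varphi)$ then $0\in\partial\varphi(u)\subseteq A(u)-N(u)$, so $A(u)=w$ in $W^*$ for some $w\in N(u)$; that is, $u$ is a weak solution of the Neumann problem with right-hand side $w\in L^{r'}(\Omega)$. Using the subcritical growth {\bf H}\ref{h1}, a Moser-type iteration gives $u\in L^\infty(\Omega)$, whence $w\in L^\infty(\Omega)$, and the nonlinear boundary regularity theory of Lieberman yields $u\in C^{1,\alpha}(\overline\Omega)\subseteq C^1(\overline\Omega)$. Thus $u$ is a solution of \eqref{prob} in the required sense.
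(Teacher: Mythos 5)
Your proposal is correct and follows essentially the same route as the paper: the same splitting $\varphi=\Phi-J$ with the Aubin--Clarke inclusion $\partial\varphi(u)\subseteq A(u)-N(u)$, the same $L^q$-bound from {\bf H}\ref{h3} on Cerami sequences, the identical interpolation computation showing $q>(r-p)\max\{N/p,1\}$ forces the exponent $r\theta<p$ (the paper's $\tau$ is exactly your $\theta$), the same $(S)_+$ conclusion, and the same $L^\infty$-plus-Lieberman regularity chain. The only deviations are cosmetic: the paper absorbs the zero-order term via $\|u_n\|_p\le c\|u_n\|_r\le c\|u_n\|^\tau$ rather than your Poincar\'e--Wirtinger argument, and it records explicitly the harmless reduction to $q<r<p^*$ (using $(r-p)N/p<r$) that your interpolation step tacitly requires.
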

\begin{proof}
We assume $p<N$ (the argument for $p\geq N$ is easier). The functional $u\mapsto\frac{\|\nabla u\|_p^p}{p}$ is of class $C^1$ with derivative $A$. By {\bf H}\ref{h1}, the functional
\[u\mapsto\int_\Omega j(x,u)\,dx\]
is Lipschitz continuous on bounded sets of $L^r(\Omega)$ with generalized subdifferential contained in $N(\cdot)$ by the Aubin-Clarke theorem (see \cite[Theorem 1.3.10]{GP1}). By Lemma \ref{gg} \ref{gg5} and the continuous embedding $W^{1,p}(\Omega)\hookrightarrow L^r(\Omega)$, we deduce that $\varphi$ is locally Lipschitz continuous with
\begin{equation}\label{stella}
\partial\varphi(u)\subseteq A(u)-N(u) \ \mbox{for all $u\in W^{1,p}(\Omega)$.}
\end{equation}

Now we prove that $\varphi$ satisfies $(C)$. Let $(u_n)_n$ be a sequence in $W^{1,p}(\Omega)$ such that $(\varphi(u_n))_n$ is bounded in $\R$ and $(1+\|u_n\|)m(u_n)\to 0$. We first prove that $(u_n)_n$ is bounded in $W^{1,p}(\Omega)$. Indeed, by Lemma \ref{gg} \ref{gg1}, \eqref{m} and \eqref{stella}, for all $n\in\N$ there exists $w_n\in N(u_n)$ such that $m(u_n)=\|A(u_n)-w_n\|_*$. So, for all $v\in W^{1,p}(\Omega)$ we have
\begin{equation}\label{v3}
\left|\langle A(u_n),v\rangle-\int_\Omega w_n v\,dx\right|\le m(u_n)\|v\|.
\end{equation}
Taking $v=u_n$, we get
\begin{equation}\label{stellastella}
-\|\nabla u_n\|_p^p+\int_\Omega w_nu_n\,dx\le m(u_n)\|u_n\|=o(1).
\end{equation}
Moreover, since $(\varphi(u_n))_n$ is bounded, we have
\begin{equation}\label{v2}
\|\nabla u_n\|_p^p-\int_\Omega pj(x,u_n)\,dx\le c.
\end{equation}
Adding \eqref{stella} and \eqref{stellastella}, we obtain
\begin{equation}\label{v1}
\int_\Omega\left(w_n u_n-pj(x,u_n)\right)\,dx\le c.
\end{equation}
By {\bf H}\ref{h3} there exist $\beta,k>0$ such that
\begin{equation}\label{3stelle}
\xi s-pj(x,s)\geq\beta |s|^q \ \mbox{a.e. in $\Omega$, for all $|s|>k$, $\xi\in\partial j(x,s)$.}
\end{equation}
So, from \eqref{v1}, \eqref{3stelle} and {\bf H}$(i)$ we have
\[c \ge \int_{\{|u_n|>k\}}\beta |u_n|^q\,dx+\int_{\{|u_n|\leq k\}}(w_nu_n-pj(x,u_n))\,dx \ge \beta\|u_n\|_q^q-c.\]
Thus, $(u_n)_n$ is bounded in $L^q(\Omega)$. By {\bf H}\ref{h1} we know that $(r-p)\frac{N}{p}<r$, so we may assume $q<r<p^*$. We set
\[\tau=\frac{r-q}{r}\frac{Np}{Np-q(N-p)}\in(0,1),\]
hence we have $1/r=(1-\tau)/q+\tau/p^*$ and $\tau r<p$.
By the interpolation inequality (see Brezis \cite[Remark 2, p. 93]{B}) and the continuous embedding $W^{1,p}(\Omega)\hookrightarrow L^{p^*}(\Omega)$, we have
\[\|u_n\|_r \le \|u_n\|_{p^*}^\tau\|u_n\|_{q}^{1-\tau} \le c\|u_n\|^\tau.\]
By {\bf H}\ref{h1}, \eqref{v3}, H\"older inequality, and the inequality above, we obtain
\[\|u_n\|^p\le a_0(\|u_n\|_1+\|u_n\|_r^r) +\|u_n\|_p^p\le c(\|u_n\|+\|u_n\|^{\tau r}+\|u_n\|^{\tau p}).\]
Recalling that $\max\{1,\tau r,\tau p\}<p$, we finally deduce that $(u_n)_n$ is bounded in $W^{1,p}(\Omega)$. Passing to a subsequence, we may assume that $u_n\rightharpoonup u$ in $W^{1,p}(\Omega)$ and $u_n\to u$ in $L^\sigma(\Omega)$ for every $\sigma<p^\ast$. From \eqref{v3} we have for all $n\in\N$
\begin{align*}
\langle A(u_n),u_n-u\rangle &\le m(u_n)\|u_n-u\|+\int_\Omega |w_n(u_n-u)|\,dx \\
&\le m(u_n)\|u_n-u\|+\|w_n\|_{r'}\|u_n-u\|_r = o(1).
\end{align*}
By the $(S)_+$ property of $A$ in $W^{1,p}(\Omega)$ (see Lemma \ref{s+}), we have that $u_n\to u$ in $W^{1,p}(\Omega)$. So $(C)$ holds.

Now, let $u\in K(\varphi)$. Then, there exists $w\in N(u)$ such that $A(u)=w$ in $W^*$. By \cite[Theorem 1.5.5, Remark 1.5.9]{GP1} we have that $u\in L^\infty(\Omega)$. Nonlinear regularity theory (see Lieberman \cite[Theorem 2]{L}) then implies $u\in C^1(\overline\Omega)$. Thus, $u$ is a solution of \eqref{prob}.
\end{proof}

The aim of functionals $\varphi^\varepsilon_\pm$ is to select strictly positive or negative solutions:

\begin{lem}\label{var2}
If hypotheses {\bf H} hold, then $\varphi_\pm^\varepsilon:W^{1,p}(\Omega)\to\R$ are locally Lipschitz continuous and satisfy $(C)$. Moreover, if $u\in K(\varphi_\pm^\varepsilon)\setminus\{0\}$, then $u\in\pm{\rm int}(C_+)$ is a solution of \eqref{prob}.
\end{lem}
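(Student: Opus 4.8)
The plan is to mirror the proof of Lemma \ref{var1} as closely as possible, since $\varphi^\eps_\pm$ is obtained from $\varphi$ by replacing $j$ with the truncated potential $j_\pm$ and by adding the penalization $\frac{\eps\|u^\mp\|_p^p}{p}$; the sign of a critical point will then be read off from the truncation. First I would note that $j_\pm$ inherits from {\bf H}\ref{h1} the very same subcritical growth of its subdifferential, because the arguments $\pm s^\pm$ involved satisfy $|\pm s^\pm|\le|s|$; hence the Aubin--Clarke theorem applies verbatim and $u\mapsto\int_\Omega j_\pm(x,u)\,dx$ is locally Lipschitz on $W^{1,p}(\Omega)$ with subdifferential contained in $N_\pm(\cdot)$. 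Since $u\mapsto\frac{\|\nabla u\|_p^p}{p}$ is $C^1$ with derivative $A$ and the penalization is $C^1$, Lemma \ref{gg}\ref{gg5} gives that $\varphi^\eps_\pm$ is locally Lipschitz continuous with
\[\partial\varphi^\eps_\pm(u)\subseteq A(u)+\eps P_\pm(u)-N_\pm(u)\quad\text{for all }u\in W^{1,p}(\Omega),\]
where $P_\pm$ is the continuous, single-valued derivative of $u\mapsto\|u^\mp\|_p^p/p$. The only feature of $P_\pm$ I shall use is that $\langle P_\pm(u),\mp u^\mp\rangle=\|u^\mp\|_p^p$.

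For the Cerami condition I would take a sequence $(u_n)_n$ with $(\varphi^\eps_\pm(u_n))_n$ bounded and $(1+\|u_n\|)m^\eps_\pm(u_n)\to 0$, and pick $z_n=A(u_n)+\eps P_\pm(u_n)-w_n\in\partial\varphi^\eps_\pm(u_n)$ of minimal norm $m^\eps_\pm(u_n)$, with $w_n\in N_\pm(u_n)$. The decisive observation is that $w_n$ vanishes on $\{\mp u_n\ge 0\}$, since there $j_\pm(x,\cdot)$ is locally constant. Testing $z_n$ against $\mp u_n^\mp$ therefore leaves only
\[\langle z_n,\mp u_n^\mp\rangle=\|\nabla u_n^\mp\|_p^p+\eps\|u_n^\mp\|_p^p,\]
and since the left-hand side is bounded by $m^\eps_\pm(u_n)\|u_n\|=o(1)$, I deduce $\|u_n^\mp\|\to 0$: the penalization annihilates the wrong-sign part. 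The surviving part $u_n^\pm$ is then controlled exactly as in Lemma \ref{var1}: testing $z_n$ against $u_n^\pm$ and invoking {\bf H}\ref{h3} bounds $(u_n^\pm)$ in $L^q(\Omega)$, after which the interpolation-and-growth estimate of Lemma \ref{var1} yields boundedness of $(u_n^\pm)$, hence of $(u_n)$, in $W^{1,p}(\Omega)$. Passing to a weak limit and applying the $(S)_+$ property of $A$ (Lemma \ref{s+}) produces a strongly convergent subsequence, so $(C)$ holds.

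Next I would extract the sign. Let $u\in K(\varphi^\eps_\pm)\setminus\{0\}$, so $A(u)+\eps P_\pm(u)=w$ in $W^*$ for some $w\in N_\pm(u)$. Testing this identity against $\mp u^\mp$ and using once more that $w$ vanishes on $\{\mp u\ge 0\}$ gives
\[\|\nabla u^\mp\|_p^p+\eps\|u^\mp\|_p^p=0,\]
whence $u^\mp=0$, i.e. $\pm u\ge 0$ and $\pm u\ne 0$. On $\{\pm u>0\}$ both the truncation and the penalization are inactive, so $u$ solves $A(u)=w$ with $w\in N(u)$; the nonlinear regularity argument of Lemma \ref{var1} then gives $u\in C^1(\overline\Omega)$, and $u$ is a solution of \eqref{prob}. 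Finally {\bf H}\ref{h5} yields the pointwise bound $w\geq -c_0|u|^{p-1}$ on $\{\pm u>0\}$, that is $\Delta_p(\pm u)\le c_0(\pm u)^{p-1}$, so $\pm u$ is a nonnegative, nontrivial subsolution to which the strong maximum principle for the $p$-Laplacian (in the spirit of V\'azquez) applies, giving $\pm u>0$ inside $\Omega$; Hopf's boundary point lemma, together with the Neumann condition $\partial u/\partial\nu=0$, excludes zeros on $\partial\Omega$, so $\pm u(x)>0$ for all $x\in\overline\Omega$, i.e. $u\in\pm{\rm int}(C_+)$.

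The routine part is the verification of $(C)$, which repeats Lemma \ref{var1} once the penalization has been shown to control $u_n^\mp$. I expect the main obstacle to be twofold. The first is the careful bookkeeping on the truncated subdifferential $N_\pm$: one must be certain that $w_n$ (resp.\ $w$) genuinely vanishes where $\mp u_n\ge 0$ (resp.\ $\mp u\ge 0$), as this is precisely what makes the test functions $\mp u_n^\mp$ and $\mp u^\mp$ effective in decoupling the two signs. The second, and more delicate, is the passage from ``$\pm u\ge 0$, $\pm u\ne 0$'' to strict positivity on all of $\overline\Omega$: this rests on the nonlinear strong maximum principle, whose applicability is guaranteed by {\bf H}\ref{h5}, and on a boundary-point argument forced by the Neumann condition; some care is also needed on the null set $\{u=0\}$ when justifying the differential inequality, a point that becomes harmless once interior strict positivity is in hand.
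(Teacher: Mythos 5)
Your proposal is correct and follows essentially the same route as the paper's proof: the same Aubin--Clarke subdifferential inclusion, the same test functions $\mp u_n^\mp$, $u_n^\pm$ and $u_n$ (merely in a slightly different order) combined with {\bf H}(iii), interpolation and the $(S)_+$ property of $A$ for the Cerami condition, the same test with $u^\mp$ to annihilate the wrong-sign part of a critical point, and the same Lieberman regularity plus V\'azquez strong maximum principle (enabled by {\bf H}(v)) to conclude $u\in\pm{\rm int}(C_+)$. Your closing caveats --- the bookkeeping for $N_\pm$ on the set $\{u=0\}$ and the fact that $N_\pm(u)=N(u)$ is legitimately invoked only after strict positivity --- are exactly the points the paper treats in the same way.
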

\begin{proof}
Let us consider $\varphi^\varepsilon_+$ (the argument for $\varphi^\eps_-$ is analogous). For all $(x,s)\in\Omega\times\R$ we have
\begin{equation}\label{gdj+}
\partial j_+(x,s)
\begin{cases}
=\{0\} &\mbox{if $s<0$}, \\
\subseteq\{\tau\xi \ : \ \xi\in\partial j(x,0),\ \tau\in[0,1]\} &\mbox{if $s=0$}, \\
=\partial j(x,s) &\mbox{if $s>0$}.
\end{cases}
\end{equation}
Arguing as in the proof of Lemma \ref{var1} we deduce that $\varphi^\varepsilon_+$ is locally Lipschitz and for all $u\in W^{1,p}(\Omega)$
\begin{equation}\label{u1}
\partial\varphi^\eps_+(u)\subseteq A(u)-\eps(u^-)^{p-1}-N_+(u).
\end{equation}

We prove that $\varphi^\varepsilon_+$ satisfies $(C)$. Let $(u_n)_n$ be a sequence in $W^{1,p}(\Omega)$ such that $(\varphi^\varepsilon_+(u_n))_n$ is bounded in $\R$ and $(1+\|u_n\|)m^\varepsilon_+(u_n)\to 0$. By Lemma \ref{gg} \ref{gg1}, for all $n\in\N$ there exists $w_n\in N_+(u_n)$ such that $m^\varepsilon_+(u_n)=\|A(u_n)-\eps(u_n^-)^{p-1}-w_n\|_*$. So, for all $v\in W^{1,p}(\Omega)$ we get 
\begin{equation}\label{v3+}
\left|\langle A(u_n),v\rangle-\int_\Omega\big(\eps(u_n^-)^{p-1}+w_n\big)v\,dx\right|\leq m^\varepsilon_+(u_n)\|v\|.
\end{equation}
Testing \eqref{v3+} with $v=u_n^+$, we have (note that $u_n^+u_n^-=0$ a.e in $\Omega$)
\[\|\nabla u_n^+\|_p^p-\int_\Omega w_nu_n^+\,dx=o(1),\]
while from the bound on $(\varphi(u_n))_n$ we have (note that $(u_n^+)^-=0$ a.e in $\Omega$)
\[\|\nabla u_n^+\|_p^p-\int_\Omega pj(x,u_n^+)\,dx\le c.\]
Applying {\bf H}\ref{h1}, \ref{h3} as in Lemma \ref{var1}, we see that $(u_n^+)_n$ is bounded in $L^q(\Omega)$. Testing \eqref{v3+} with $v=-u_n^-$, we have
\[\|\nabla u_n^-\|_p^p+\eps\|u_n^-\|_p^p=o(1),\]
hence $u_n^-\to 0$ in $W^{1,p}(\Omega)$. Reasoning as in Lemma \ref{var1}, we have $\|u_n\|_r\le c\|u_n\|^\tau$ for some $\tau\in[0,p/r)$. Testing \eqref{v3+} with $v=u_n$, we have
\begin{align*}
\|\nabla u_n\|_p^p &\le \eps\|u_n^-\|_p^p+\int_\Omega w_n u_n\,dx +o(1) \\
&\le c\big(\|u_n\|_q+\|u_n\|^{\tau r}\big)+o(1),
\end{align*}
which implies that $\|\nabla u_n\|_p\le c$. Since $\|\nabla(\cdot)\|_p+\|\cdot\|_q$ is an equivalent norm on $W^{1,p}(\Omega)$, we see that $(u_n)_n$ is bounded in $W^{1,p}(\Omega)$. Now, we argue as in Lemma \ref{var1} and find a convergent subsequence.

Finally, let $u\in K(\varphi_\pm^\varepsilon)\setminus\{0\}$. Since $0\in\partial\varphi^\varepsilon_+(u)$, by \eqref{u1} we find $w\in N_+(u)$ such that
\begin{equation}\label{ws+}
A(u) = \varepsilon(u^-)^{p-1}+w \ \mbox{in $W^*$.}
\end{equation}
Taking $v=u^-$, from \eqref{gdj+} we have
\[\|\nabla u^-\|_p^p+\varepsilon\|u^-\|_p^p=0,\]
hence $u\in W_+\setminus\{0\}$. Arguing as in Lemma \ref{var1}, we get $u\in C_+\setminus\{0\}$. By {\bf H}\ref{h5} we can apply the nonlinear maximum principle of V\'azquez \cite[Theorem 5]{V} (see also Pucci \& Serrin \cite[Theorem 1.1.1]{PS}) and we obtain $u\in{\rm int}(C_+)$. In particular, $N_+(u)=N(u)$, so by \eqref{ws+} $u$ is a solution of \eqref{prob}.
\end{proof}

The next Lemma deals with the zero solution:

\begin{lem}\label{zero}
If hypotheses {\bf H} hold, then $0$ is a local minimizer of both $\varphi$ and $\varphi^\eps_\pm$. In particular, $0$ is a solution of \eqref{prob}.
\end{lem}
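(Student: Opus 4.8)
The goal is to show that $0$ is a local minimizer of $\varphi$ and of $\varphi^\eps_\pm$, from which $0\in\partial\varphi(0)$ follows by Lemma~\ref{gg}~\ref{gg8}, and hence (arguing as in Lemma~\ref{var1}, since $0\in N(0)$ by hypothesis $j(\cdot,0)\in L^1(\Omega)$) that $0$ solves \eqref{prob}. The entire local-minimality statement rests on hypothesis {\bf H}\ref{h4}, which gives a pointwise bound $j(x,s)\le j(x,0)$ for a.a.\ $x$ and all $|s|\le\delta_0$. The plan is to deduce local minimality from this in the $C^1$ topology first, and then upgrade it to the $W^{1,p}(\Omega)$ topology.

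First I would establish that $0$ is a \emph{local $C^1$-minimizer}. For $u\in C^1(\overline\Omega)$ with $\|u\|_{C^1}\le\delta_0$ we have $|u(x)|\le\delta_0$ for all $x\in\overline\Omega$, so {\bf H}\ref{h4} yields $j(x,u(x))\le j(x,0)$ pointwise. Integrating and using $\|\nabla u\|_p^p\ge 0$ gives
\[
\varphi(u)=\frac{\|\nabla u\|_p^p}{p}-\int_\Omega j(x,u)\,dx\ge -\int_\Omega j(x,0)\,dx=\varphi(0),
\]
so $\varphi(u)\ge\varphi(0)$ on a $C^1$-ball of radius $\delta_0$. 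For $\varphi^\eps_\pm$ the extra term $\tfrac{\eps}{p}\|u^\mp\|_p^p\ge 0$ only helps, and on $\{|u|\le\delta_0\}$ we have $j_\pm(x,u)=j(x,\pm u^\pm)\le j(x,0)$ again by {\bf H}\ref{h4} (the point $\pm u^\pm$ has modulus $\le\delta_0$), so the same estimate gives $\varphi^\eps_\pm(u)\ge\varphi^\eps_\pm(0)=\varphi(0)$. Thus $0$ is a local $C^1$-minimizer of all three functionals.

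The main obstacle is passing from local $C^1$-minimality to local $W^{1,p}(\Omega)$-minimality, since a $C^1$-ball is not a $W^{1,p}$-neighborhood. This is exactly the nonsmooth analogue of the classical Brezis--Nirenberg type equivalence between $C^1$ and $H^1$ (respectively $W^{1,p}_0$) local minimizers, which for the $p$-Laplacian is due to Garc\'ia Azorero, Peral \& Manfredi; in the locally Lipschitz Neumann setting the relevant statement has been established in the nonsmooth literature cited earlier (e.g.\ \cite{IP,KP}). I would invoke this equivalence directly: under the growth condition {\bf H}\ref{h1} and the regularity already proved in Lemma~\ref{var1}, any local minimizer of $\varphi$ (or $\varphi^\eps_\pm$) in the $C^1(\overline\Omega)$ topology is also a local minimizer in $W^{1,p}(\Omega)$. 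Applying this to the $C^1$-minimality established above yields the claim, and then local minimality gives $0\in\partial\varphi(0)$ and $0\in\partial\varphi^\eps_\pm(0)$ by Lemma~\ref{gg}~\ref{gg8}, completing the proof. The one delicate point to verify is that the quoted $C^1$-versus-$W^{1,p}$ equivalence is available in precisely the nonsmooth, subdifferential-inclusion form needed here; if a ready citation is not at hand, I would reprove it by a short contradiction argument, taking a sequence $u_n\to 0$ in $W^{1,p}(\Omega)$ with $\varphi(u_n)<\varphi(0)$, showing via the nonlinear regularity of Lieberman \cite{L} that suitable minimizers over small $W^{1,p}$-balls converge to $0$ in $C^1(\overline\Omega)$, and thereby forcing a contradiction with the $C^1$-minimality just proved.
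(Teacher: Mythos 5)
Your proposal is correct and follows essentially the same route as the paper: the same pointwise estimate from {\bf H}\ref{h4} on a $C^1(\overline\Omega)$-ball of radius $\delta_0$ (with the observation that the extra $\eps$-term and the bound $|\!\pm u^\pm|\le\delta_0$ handle $\varphi^\eps_\pm$), followed by the $C^1$-versus-$W^{1,p}$ local minimizer equivalence, for which the paper likewise cites \cite[Proposition 3]{IP}, and then Lemma~\ref{gg}~\ref{gg8} and Lemma~\ref{var1} to conclude. Your fallback sketch of the equivalence (minimizers over small $W^{1,p}$-balls converging in $C^1(\overline\Omega)$ via Lieberman's regularity) is exactly the mechanism behind that cited proposition, so nothing is missing.
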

\begin{proof}
We deal with $\varphi$ (the argument for $\varphi^\eps_\pm$ is analogous). Let $\delta_0>0$ be as in hypothesis {\bf H}\ref{h4} and set
\[B_{\delta_0}^C(0)=\left\{u\in C^1(\overline\Omega):\,\|u\|_{C^1}<\delta_0\right\}.\]
In particular, for all $u\in B_{\delta_0}^C(0)\setminus\{0\}$ we have $\|u\|_\infty<\delta_0$, hence
\[\varphi(u) \geq -\int_\Omega j(x,u)\,dx \ge -\int_\Omega j(x,0)\,dx = \varphi(0).\]
So, $0$ is a local minimizer of the restriction of $\varphi$ to $C^1(\overline\Omega)$. Reasoning as in Iannizzotto $\&$ Papageorgiou \cite[Proposition 3]{IP}, we see that $0$ is a local minimizer in $W^{1,p}(\Omega)$ as well, hence by Lemma \ref{gg} \ref{gg8} a critical point of $\varphi$. By Lemma \ref{var1}, $0$ solves \eqref{prob}.
\end{proof}

Now we are ready to introduce our two solutions result:

\begin{thm}\label{two}
If hypotheses {\bf H} hold, then problem \eqref{prob} admits at least two non-zero solutions $u_+\in{\rm int}(C_+)$ and $u_-\in -{\rm int}(C_+)$.
\end{thm}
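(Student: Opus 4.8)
The plan is to obtain $u_+$ as a nonzero critical point of $\varphi^\eps_+$ and $u_-$ as a nonzero critical point of $\varphi^\eps_-$, via the nonsmooth mountain pass theorem (Theorem \ref{mpt}). By Lemma \ref{var2} these functionals are locally Lipschitz and satisfy $(C)$, and any nonzero element of $K(\varphi^\eps_+)$ (resp.\ $K(\varphi^\eps_-)$) lies in ${\rm int}(C_+)$ (resp.\ $-{\rm int}(C_+)$) and solves \eqref{prob}; so it suffices to produce one nonzero critical point of each. I would treat $\varphi^\eps_+$, the argument for $\varphi^\eps_-$ being symmetric. At the outset I may assume $K(\varphi^\eps_+)$ is finite: otherwise it contains infinitely many nonzero points, each already a solution in ${\rm int}(C_+)$, and we are done. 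In particular $0$ is an \emph{isolated} critical point.

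The mountain pass geometry around $0$ rests on two facts. First, by Lemma \ref{zero}, $0$ is a local minimizer of $\varphi^\eps_+$; combined with isolation, I would derive the strict well condition
\[
\inf_{\partial B_\rho(0)}\varphi^\eps_+=\eta_\rho>\varphi^\eps_+(0)
\]
for all sufficiently small $\rho>0$. Second, the $p$-superlinearity {\bf H}\ref{h2} forces $\varphi^\eps_+$ to be unbounded below: evaluating at the constant function $u\equiv t>0$ (so that $\nabla u=0$, $u^-=0$ and $\varphi^\eps_+(t)=-\int_\Omega j(x,t)\,dx$), hypothesis {\bf H}\ref{h2} gives, for every $M>0$ and all $t$ large, $j(x,t)\ge M t^p$ a.e.\ in $\Omega$, whence $\varphi^\eps_+(t)\le -M|\Omega|\,t^p\to-\infty$. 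Thus I can fix a constant function $u_1\equiv t_1>0$ with $\|u_1\|>\rho$ and $\varphi^\eps_+(u_1)<\varphi^\eps_+(0)$.

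With $u_0=0$ and $u_1$ as above, the hypotheses of Theorem \ref{mpt} are satisfied, since $\eta_\rho>\max\{\varphi^\eps_+(0),\varphi^\eps_+(u_1)\}$. Hence the theorem yields a critical value $c\ge\eta_\rho>\varphi^\eps_+(0)$, so there is $u_+\in K_c(\varphi^\eps_+)$. Because $\varphi^\eps_+(u_+)=c>\varphi^\eps_+(0)$, necessarily $u_+\ne 0$, and Lemma \ref{var2} then gives $u_+\in{\rm int}(C_+)$ solving \eqref{prob}. Running the same scheme with $\varphi^\eps_-$ and the constant function $u\equiv -t$ produces $u_-\in-{\rm int}(C_+)$, completing the proof.

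The delicate point is the strict well inequality: since $\varphi^\eps_+$ is merely locally Lipschitz and we have no gradient flow, one cannot read it off directly from minimality. I would argue by contradiction, assuming $\inf_{\partial B_\rho(0)}\varphi^\eps_+=\varphi^\eps_+(0)$ along a sequence $\rho\downarrow 0$; minimizing $\varphi^\eps_+$ over thin closed annuli $\{\rho_1\le\|u\|\le\rho_2\}$ by Ekeland's variational principle and extracting a convergent subsequence through the $(C)$ condition, one locates critical points other than $0$ arbitrarily close to $0$, contradicting the isolation of $0$. Equivalently, this is the nonsmooth form of the standard \emph{local minimizer implies mountain pass geometry} principle, which can also be deduced from the deformation result of Theorem \ref{sdl}. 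The remaining steps are routine given the preparatory lemmas.
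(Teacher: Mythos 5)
Your proposal is correct, and its skeleton coincides with the paper's: apply the nonsmooth mountain pass theorem (Theorem \ref{mpt}) to $\varphi^\eps_\pm$ with $u_0=0$ and a large constant function as $u_1$ (using {\bf H}\ref{h2} to drive $\varphi^\eps_+$ to $-\infty$ along constants), then invoke Lemma \ref{var2} for regularity, strict sign, and the passage back to \eqref{prob}. The genuine difference lies in how the strict well inequality \eqref{tw1} is obtained. You first reduce, without loss of generality, to $K(\varphi^\eps_+)$ finite (otherwise a nonzero critical point exists outright) and then run a generic nonsmooth principle --- isolated local minimizer plus $(C)$ implies mountain pass geometry --- via Ekeland's variational principle on thin annuli, producing near-critical points with small $m$-values and hence, by $(C)$, a nonzero critical point contradicting isolation. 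The paper instead splits differently: if $0$ is not a \emph{strict} local minimizer, nearby points at the same level are themselves local minimizers, hence critical by Lemma \ref{gg} \ref{gg8}; if it is strict, the paper proves \eqref{tw1} directly by taking a minimizing sequence $(u_n)_n\subset\partial B_\rho(0)$ with $\varphi^\eps_+(u_n)\to\varphi^\eps_+(0)$, using sequential weak lower semicontinuity to force the weak limit to be $0$, and then exploiting the specific structure of $\varphi^\eps_+$ (the identity $\frac{\|\nabla u_n\|_p^p}{p}+\frac{\eps\|u_n\|_p^p}{p}=\varphi^\eps_+(u_n)+\frac{\eps\|u_n^+\|_p^p}{p}+\int_\Omega j_+(x,u_n)\,dx$, together with compact embeddings) to upgrade to strong convergence $u_n\to 0$, contradicting $\|u_n\|=\rho$. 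Your route is more robust --- it works for any locally Lipschitz functional satisfying $(C)$ and makes no use of the $\eps$-penalization term --- but it imports an auxiliary lemma not among the paper's stated tools, and a complete write-up would have to verify the standard but nontrivial details: $\varphi^\eps_+$ is bounded below on the annulus (which follows from {\bf H}\ref{h1} and Lebourg's theorem), the Ekeland point lands in the interior of the annulus so that the variational inequality yields, via the support-function property behind Lemma \ref{gg} \ref{gg6}, a subgradient of norm at most $\eps$, and the resulting sequence is a $(C)$-sequence (here boundedness of the annulus makes the weight $1+\|u\|$ harmless). The paper's argument, by contrast, is self-contained within its preliminaries at the cost of being tailored to this particular functional. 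Everything else in your proposal (the choice of the constant endpoint, the verification $\eta_\rho>\max\{\varphi^\eps_+(0),\varphi^\eps_+(u_1)\}$, the conclusion $u_+\ne 0$ from $c\ge\eta_\rho>\varphi^\eps_+(0)$, and the symmetric treatment of $\varphi^\eps_-$) matches the paper's proof.
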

\begin{proof}
We focus on positive solutions, so we consider functional $\varphi_+^\varepsilon$. From Lemma \ref{zero} we know that $0$ is a local minimizer of $\varphi^\varepsilon_+$. If $0$ is not a {\em strict} local minimizer, then we easily find another critical point $u_+\in K(\varphi_+^\varepsilon)$, which by Lemma \ref{var2} turns out to be a positive solution of \eqref{prob}.

So, we assume that $0$ is a strict local minimizer of $\varphi_+^\varepsilon$. So, there exists $\rho>0$ such that $\varphi^\varepsilon_+(u)>\varphi^\varepsilon_+(0)$ for all $u\in\overline B_\rho(0)\setminus\{0\}$. We prove that in fact
\begin{equation}\label{tw1}
\inf_{\partial B_\rho(0)}\varphi^\varepsilon_+:=\eta_\rho>\varphi^\varepsilon_+(0).
\end{equation}
We argue by contradiction: assume that there exists a sequence $(u_n)_n$ in $\partial B_\rho(0)$ such that $\varphi^\varepsilon_+(u_n)\to\varphi^\varepsilon_+(0)$ as $n\to\infty$. Passing to a subsequence, we may assume that $u_n\rightharpoonup u$ in $W^{1,p}(\Omega)$ and $u_n\to u$ in $L^t(\Omega)$ for all $t\in[p,p^*)$. Clearly $u\in\overline B_\rho(0)$. It is easily seen that $\varphi^\varepsilon_+$ is sequentially weakly lower semicontinuous in $W^{1,p}(\Omega)$, hence
\[\varphi^\varepsilon_+(u)\leq\liminf_n \varphi^\eps_+(u_n)=\varphi^\varepsilon_+(0),\]
which in turn implies $u=0$. Thus, by the relations above, we have
\[\frac{\|\nabla u_n\|_p^p}{p}+\frac{\varepsilon\|u_n\|_p^p}{p}=\varphi^\varepsilon_+(u_n)+\frac{\varepsilon\|u_n^+\|_p^p}{p}+\int_\Omega j_+(x,u_n)\,dx=o(1).\]
So $u_n\to 0$ in $W^{1,p}(\Omega)$, against the assumption that $u_n\in\partial B_\rho(0)$ for all $n\in\N$.

Now, by hypothesis {\bf H}\ref{h2} there exists $k\in\R$ such that
\[k>|\Omega|^{-\frac{1}{p}}\max\{\rho,|\eta_\rho|^\frac{1}{p}\}, \quad j(x,k)>k^p \ \mbox{a.e. in $\Omega$}\]
(by $|\Omega|$ we denote the $N$-dimensional Lebesgue measure of $\Omega$). Setting $\overline u(x)=k$ for all $x\in\Omega$, we have $\overline u\in W^{1,p}(\Omega)$ and $\|\overline u\|>\rho$. On the other hand,
\[\varphi^\varepsilon_+(\overline u)=-\int_\Omega j_+(x,k)\,dx<\eta_\rho.\]
We apply Theorem \ref{mpt} with $u_0=0$, $u_1=\overline u$, finding that $c\geq\eta_\rho$ and there exists $u_+\in K_c(\varphi^\varepsilon_+)$. From \eqref{tw1} we know that $u_+\neq 0$. Then, by Lemma \ref{var2}, $u_+\in{\rm int}(C_+)$ is a solution of \eqref{prob}.

A similar argument, applied to $\varphi^\varepsilon_-$, leads to the existence of a solution $u_-\in -{\rm int}(C_+)$.
\end{proof}

\section{Critical groups and the third solution}\label{sec5}

In this section we aim at proving the existence of a fourth critical point of the nonsmooth energy functional $\varphi$. Avoiding trivial situations, we may assume that both $K(\varphi)$ and $K(\varphi^\varepsilon_\pm)$ are {\em finite} sets. In particular, then, every critical point of $\varphi$ or $\varphi^\varepsilon_\pm$ is isolated. We begin by computing the critical groups at infinity of $\varphi$ (our result is the nonsmooth extension of \cite[Proposition 6.64]{MMP}):

\begin{lem}\label{cginf1}
If hypotheses {\bf H} hold, then $C_k(\varphi,\infty)=0$ for all $k\in\N_0$.
\end{lem}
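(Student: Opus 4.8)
The goal is to show that the sublevel sets of $\varphi$ near $-\infty$ are contractible, so that $H_k(X,\overline\varphi^c)=0$ for $c$ very negative. My plan is to fix a large radius sphere $S=\partial B_1(0)$ in $W^{1,p}(\Omega)$ and apply the nonsmooth implicit function lemma (Lemma \ref{ift}) to produce a radial retraction of $X\setminus\{0\}$ onto a level set of $\varphi$. First I would verify the hypotheses of Lemma \ref{ift}. The coercivity-from-above condition \ref{ift1}, namely $\varphi(tu)\to-\infty$ as $t\to\infty$ for each $u\in S$, follows from the $p$-superlinearity \textbf{H}\ref{h2}: writing $\varphi(tu)=\frac{t^p}{p}\|\nabla u\|_p^p-\int_\Omega j(x,tu)\,dx$, hypothesis \ref{h2} forces $j(x,tu)/(t|u|)^p\to\infty$, which (after an integration/Fatou argument on the set where $u\ne 0$) dominates the $t^p$ growth of the gradient term. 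The radial-transversality condition \ref{ift2}, $\langle v^*,v\rangle<0$ on the level set $\varphi^{-1}(\mu)$, is the more delicate point: using \eqref{stella} one has $v^*=A(v)-w$ with $w\in N(v)$, so $\langle v^*,v\rangle=\|\nabla v\|_p^p-\int_\Omega wv\,dx$, and I would combine the quantitative form \eqref{3stelle} of \textbf{H}\ref{h3} with \textbf{H}\ref{h2} to show that on a sufficiently negative level $\mu$ this quantity is strictly negative. Concretely, $\|\nabla v\|_p^p-\int wv\,dx = p\varphi(v)+\int_\Omega(p j(x,v)-wv)\,dx$, and \eqref{3stelle} makes the integrand $\le -\beta|v|^q$ outside a bounded set; choosing $\mu$ negative enough (equivalently $\|v\|$ large) this is strictly negative.

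Once Lemma \ref{ift} is in force, I obtain a continuous map $\tau:S\to(1,\infty)$ whose graph $\{\tau(u)u:u\in S\}$ is exactly the level set $\varphi^{-1}(\mu)$, and $\varphi(tu)<\mu$ precisely for $t>\tau(u)$. Taking $c=\mu$ with $c<\inf_{K(\varphi)}\varphi$ (possible since $K(\varphi)$ is finite, hence $\inf_{K(\varphi)}\varphi>-\infty$, and \ref{ift1} guarantees arbitrarily negative values), I would build two deformation retractions. The outward radial flow shows that $\overline\varphi^\mu$ deformation-retracts onto the level set $\varphi^{-1}(\mu)\cong S$; more usefully, the map $(s,u)\mapsto$ radial interpolation shows $X\setminus\{0\}$ deformation-retracts onto $\varphi^{-1}(\mu)$, and hence onto $\overline\varphi^\mu$, along rays. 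This is the nonsmooth analogue of the standard fact that below a superlinear energy the space looks like the cone over a sphere.

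From here the homology is computed as follows. The pair $(X,\overline\varphi^c)$ has the same homology as $(X,X\setminus\{0\})$ via the radial retraction, since $\overline\varphi^c$ is a deformation retract of $X\setminus\{0\}$. Because $X=W^{1,p}(\Omega)$ is an infinite-dimensional Banach space, the unit sphere $S=\partial B_1(0)$ is contractible (by Klee's theorem / the infinite-dimensionality of $X$), so $X\setminus\{0\}$ is itself contractible, and therefore $H_k(X,X\setminus\{0\})=0$ for all $k\in\N_0$. Consequently $C_k(\varphi,\infty)=H_k(X,\overline\varphi^c)=0$ for every $k$, which is the assertion.

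The step I expect to be the main obstacle is verifying condition \ref{ift2}, the strict radial monotonicity $\langle v^*,v\rangle<0$ on a negative level set, uniformly over the (generally nonsmooth, set-valued) choice of $v^*\in\partial\varphi(v)$. The subtlety is that one must rule out this quantity vanishing for \emph{some} element of the subdifferential while using only the one-sided control coming from \textbf{H}\ref{h3}; the Lebourg mean value theorem (Theorem \ref{mvt}) together with the lower bound \eqref{3stelle} will be needed to convert the pointwise inequality $pj(x,s)-\xi s\le -\beta|s|^q$ into a genuine integral estimate valid for all large $\|v\|$, and one must check this persists for \emph{every} $w\in N(v)$ rather than a favorable selection.
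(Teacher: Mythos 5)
Your proposal is correct and is essentially the paper's own argument: you verify the two hypotheses of Lemma \ref{ift} exactly as the paper does, using \textbf{H}\ref{h2} for the blow-down along rays and \textbf{H}\ref{h3} for the transversality condition via the estimate $\langle v^*,v\rangle\le p\varphi(v)+\alpha$ for every $v^*=A(v)-w$ with $w\in N(v)$, and then conclude through radial strong deformation retractions and the contractibility of the unit sphere of the infinite-dimensional space $W^{1,p}(\Omega)$ --- the only (cosmetic) difference being that the paper deforms the exterior set $D=\{tu:\,u\in\partial B_1(0),\,t\ge 1\}$ rather than $X\setminus\{0\}$, and it additionally imposes $\mu<\inf_{\overline B_1(0)}\varphi$ (see \eqref{cgi3}), a condition you should add as well, since Lemma \ref{ift} only controls $\varphi(tu)$ for $t\ge 1$ and without it $\varphi^{-1}(\mu)$ need not coincide with the graph of $\tau$. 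Your closing worry is unfounded: \textbf{H}\ref{h3} is formulated with a minimum over $\xi\in\partial j(x,s)$, so \eqref{3stelle} holds for \emph{every} selection $w\in N(v)$ and no appeal to Lebourg's theorem (Theorem \ref{mvt}) is required.
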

\begin{proof}
We prove that
\begin{equation}\label{cgi1}
\lim_{t\to \infty}\varphi(tu)=-\infty \ \mbox{for all $u\in\partial B_1(0)$.}
\end{equation}
Indeed, fix $u\in \partial B_1(0)$ and choose $\sigma>0$ such that $\sigma\|u\|_p^p>\frac{\|\nabla u\|_p^p}{p}$. By {\bf H}\ref{h2}, there exists $k>0$ such that $j(x,s)>\sigma|s|^p$ a.e. in $\Omega$ and for all $|s|>k$. We can find $t_0>1$ such that $t_0|u|>k$ on some subset of $\Omega$ with positive measure. So, for $t>t_0$ big enough, we have
\begin{align*}
\varphi(tu) &= \frac{t^p\|\nabla u\|_p^p}{p}-\int_{\{|u|>k/t\}}j(x,tu)\,dx-\int_{\{|u|\leq k/t\}}j(x,tu)\,dx \\
&\le \frac{t^p\|\nabla u\|_p^p}{p}-\sigma\int_{\{|u|>k/t\}}|tu|^p\,dx+c \\
&\le t^p\left(\frac{\|\nabla u\|_p^p}{p}-\sigma\|u\|_p^p\right)+\sigma k^p|\Omega|+c,
\end{align*}
and the last quantity tends to $-\infty$ as $t\to\infty$, which proves \eqref{cgi1}.

Now, we prove that there exists $\displaystyle\mu<\inf_{K(\varphi)\cup\overline B_1(0)}\varphi$ such that
\begin{equation}\label{cgi2}
\langle v^*,v\rangle<0 \ \mbox{for all $v\in\varphi^{-1}(\mu)$, $v^*\in\partial\varphi(v)$.}
\end{equation}
First, by {\bf H}\ref{h3}, we can find $\beta>0,\tilde k>0$ such that
\[\xi s-pj(x,s)>\beta|s|^q \ \mbox{a.e. in $\Omega$ for all $|s|>\tilde k$, $\xi\in\partial j(x,s)$.}\]
For all $v\in W^{1,p}(\Omega)$ and $v^*\in\partial\varphi(v)$ there exists $w\in N(v)$ such that $v^*=A(v)-w$ in $W^*$. By {\bf H}\ref{h1} we have 
\begin{align*}
\int_\Omega\left(pj(x,v)-wv\right)\,dx &\leq \int_{\{|v|\leq \tilde k\}}\left(pj(x,v)-wv\right)\,dx-\beta\int_{\{|v|>\tilde k\}}|v|^q\,dx \\
&\leq c-\beta\int_{\{|v|>\tilde k\}}|v|^q\,dx \le \alpha,
\end{align*}
for some $\alpha>0$ independent of $v$. Applying the above inequality we have
\[\langle v^*,v\rangle = \|\nabla v\|_p^p-\int_\Omega wv\,dx \leq p\varphi(v)+\alpha .\]
Now we choose
\begin{equation}\label{cgi3}
\mu<\min\left\{\inf_{K(\varphi)\cup\overline B_1(0)}\varphi,-\frac{\alpha}{p}\right\}.
\end{equation}
For all $v\in\varphi^{-1}(\mu)$, $v^*\in\partial\varphi(v)$ we immediately get \eqref{cgi2}. Now Lemma \ref{ift} ensures the existence of a continuous mapping $\tau:\partial B_1(0)\to(1,\infty)$  such that for all $u\in\partial B_1(0)$, $t\geq 1$
\[\varphi(tu) \begin{cases}
>\mu &\mbox{if $t<\tau(u)$}, \\
=\mu &\mbox{if $t=\tau(u)$}, \\
<\mu &\mbox{if $t>\tau(u)$}.
\end{cases}\]
Clearly, from the choice of $\mu$ we have
\[\overline\varphi^\mu=\left\{tu:\, u\in\partial B_1(0),t\geq\tau(u)\right\}.\]
We set
\[D=\left\{tu:\, u\in\partial B_1(0),t\geq 1\right\}\]
and define a continuous deformation $h:[0,1]\times D\to D$ by putting for all $(s,tu)\in[0,1]\times D$
\[h(s,tu)= \begin{cases}
(1-s)tu+s\tau(u)u &\mbox{if $t<\tau(u)$}, \\
tu &\mbox{if $t\geq\tau(u)$}.
\end{cases}\]
Then, for all $tu\in D$ we have $h(1,tu)\in\overline\varphi^\mu$. Moreover, we have $h(s,tu)=tu$ for all $s\in[0,1]$, $tu\in\overline\varphi^\mu$. Hence, $\overline\varphi^\mu$ is a strong deformation retract of $D$. Besides, we set for all $(s,tu)\in[0,1]\times D$
\[\tilde h(s,tu)=(1-s)tu+su,\]
and we see that $\partial B_1(0)$ is a strong deformation retract of $D$ by means of $\tilde h$, as well. Applying \cite[Corollary 6.15]{MMP} twice, we have for all $k\in\N_0$
\[H_k(W^{1,p}(\Omega),\overline\varphi^\mu)=H_k(W^{1,p}(\Omega),D)=H_k(W^{1,p}(\Omega),\partial B_1(0))=0,\]
the last equality coming from \cite[Propositions 6.24, 6.25]{MMP} (recall that the sphere $\partial B_1(0)$ is contractible in itself, as ${\rm dim}(W^{1,p}(\Omega))=\infty$). Finally, recalling \eqref{cgi3} and the definition of critical group at infinity is enough to deduce $C_k(\varphi,\infty)=0$ for all $k\in\N_0$, which concludes the proof.
\end{proof}

Similarly we compute the critical groups at infinity of $\varphi^\varepsilon_\pm$:

\begin{lem}\label{cginf2}
If hypotheses {\bf H} hold, then $C_k(\varphi^\varepsilon_\pm,\infty)=0$ for all $k\in\N_0$.
\end{lem}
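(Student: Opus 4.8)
The plan is to follow the scheme of Lemma \ref{cginf1} as closely as possible, the one genuinely new difficulty being that $\varphi^\eps_+$ is \emph{not} radially unbounded from below in every direction. Indeed, superlinearity (hypothesis {\bf H}\ref{h2}) still forces $\varphi^\eps_+(tu)\to-\infty$ as $t\to+\infty$ whenever $u^+\ne 0$, but on purely nonpositive directions the penalization term $\eps\|u^-\|_p^p/p$ makes $\varphi^\eps_+$ coercive. Consequently one cannot deform onto the whole sphere $\partial B_1(0)$ and must instead confine the argument to the positive cone $W_+$, on which $\varphi^\eps_+$ agrees with $\varphi$. I treat $\varphi^\eps_+$ in detail; the case of $\varphi^\eps_-$ is symmetric, working on $-W_+$.

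First I would fix $\mu$ exactly as in the proof of Lemma \ref{cginf1}, shrinking it if necessary so that in addition $\mu<\inf_{K(\varphi^\eps_+)}\varphi^\eps_+$ (legitimate, since $K(\varphi^\eps_+)$ is finite). The goal is to show that the sublevel set $\overline{\varphi^\eps_+}^\mu$ is contractible: granting this, the long exact sequence of the pair $(W^{1,p}(\Omega),\overline{\varphi^\eps_+}^\mu)$, together with the contractibility of the vector space $W^{1,p}(\Omega)$, forces $C_k(\varphi^\eps_+,\infty)=H_k(W^{1,p}(\Omega),\overline{\varphi^\eps_+}^\mu)=0$ for all $k\in\N_0$. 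The contractibility would be obtained by chaining two deformation retractions. In the first one, exploiting that $u^+$ and $u^-$ have disjoint supports, the homotopy $h(s,u)=u^+-(1-s)u^-$ gives $\varphi^\eps_+(h(s,u))=\frac{\|\nabla u^+\|_p^p}{p}+\frac{(1-s)^p}{p}\big(\|\nabla u^-\|_p^p+\eps\|u^-\|_p^p\big)-\int_\Omega j(x,u^+)\,dx$, which is non-increasing in $s$; hence $h$ is a strong deformation retraction of $\overline{\varphi^\eps_+}^\mu$ onto $\overline{\varphi^\eps_+}^\mu\cap W_+=\overline\varphi^\mu\cap W_+$.

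In the second step I would apply Lemma \ref{ift} to the functional $\varphi$ with the cone slice $S=\partial B_1(0)\cap W_+$: its hypotheses \ref{ift1} and \ref{ift2} are precisely the two facts already established inside the proof of Lemma \ref{cginf1} (radial decay to $-\infty$ and the sign condition on $\varphi^{-1}(\mu)$), so the lemma yields a continuous $\tau:S\to(1,\infty)$ with $\overline\varphi^\mu\cap W_+=\{tu:\,u\in S,\ t\ge\tau(u)\}$. The radial deformation $(s,tu)\mapsto\big((1-s)t+s\tau(u)\big)u$ retracts this set onto the graph $\{\tau(u)u:\,u\in S\}$, which is homeomorphic to $S$ through $u\mapsto\tau(u)u$. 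The point where the argument diverges from Lemma \ref{cginf1} is that $S=\partial B_1(0)\cap W_+$ is contractible for an elementary reason: $W_+\setminus\{0\}$ is a convex cone, hence contractible, and $S$ is its radial retract (whereas in Lemma \ref{cginf1} the corresponding set was the full sphere, contractible only because the ambient space is infinite-dimensional). Combining the three homotopy equivalences shows $\overline{\varphi^\eps_+}^\mu$ is contractible, completing the proof.

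The main obstacle I anticipate is the rigorous justification of the first deformation retraction: one must verify that the disjoint-support identities $\|\nabla h(s,u)\|_p^p=\|\nabla u^+\|_p^p+(1-s)^p\|\nabla u^-\|_p^p$ and $j_+(x,h(s,u))=j(x,u^+)$ hold, so that killing the negative part genuinely decreases $\varphi^\eps_+$ and lands back in $W_+$, and that the endpoint $u^+$ still lies in the sublevel set. A secondary bookkeeping point is to ensure that the single level $\mu$ can be chosen simultaneously below $\inf_{K(\varphi)\cup\overline B_1(0)}\varphi$, below $-\alpha/p$ (to retain condition \ref{ift2}), and below every critical value of $\varphi^\eps_+$.
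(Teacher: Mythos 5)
Your proof is correct, but it takes a genuinely different route from the paper's. The paper never truncates the negative part: it applies Lemma \ref{ift} directly to $\varphi^\eps_+$ on the larger set $S_+=\{u\in\partial B_1(0):\,\esssup_\Omega u>0\}$ of \emph{all} directions with nontrivial positive part (not just the cone slice $\partial B_1(0)\cap W_+$), after re-establishing for $\varphi^\eps_+$ itself the blow-up $\varphi^\eps_+(tu)\to-\infty$ on $S_+$ and the sign condition \ref{ift2} --- the former needs a computation on mixed-sign rays, where $j_+$ only sees $tu^+$ while the penalization contributes a positive $t^p$-term. It then identifies $\overline{(\varphi^\eps_+)}^\mu=\{tu:\,u\in S_+,\,t\ge\tau_+(u)\}$, shows that both this set and $S_+$ are strong deformation retracts of $D_+=\{tu:\,u\in S_+,\,t\ge1\}$, and contracts $S_+$ to the positive constant $u_0=|\Omega|^{-1/p}$ via the normalized segment $\hat h(s,u)=\big((1-s)u+su_0\big)/\|(1-s)u+su_0\|$, well defined precisely because each $u\in S_+$ is positive somewhere. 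Your route replaces the mixed-sign asymptotics with the preliminary homotopy $h(s,u)=u^+-(1-s)u^-$, and this step is legitimate: $u\mapsto u^\pm$ is continuous on $W^{1,p}(\Omega)$, the a.e.\ identities $\nabla u^\pm=\pm\chi_{\{\pm u>0\}}\nabla u$ give the claimed splitting of the gradient term, $j_+(x,h(s,u))=j_+(x,u)$ pointwise in $x$, and the endpoint $u^+$ is nonzero and stays in the sublevel set because your choice $\mu<\inf_{K(\varphi^\eps_+)}\varphi^\eps_+\le\varphi^\eps_+(0)$ excludes nonpositive functions from $\overline{(\varphi^\eps_+)}^\mu$ (on $-W_+$ one has $\varphi^\eps_+\ge\varphi^\eps_+(0)$). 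After that you may indeed quote \eqref{cgi1}--\eqref{cgi2} for $\varphi$ verbatim, and your contractibility argument for the cone slice is sound: $W_+\setminus\{0\}$ is convex because the cone is pointed (a convex combination with positive weights of nonzero nonnegative functions is nonzero), and the radial retraction onto $\partial B_1(0)\cap W_+$ stays inside it. The trade-off is clear: the paper's proof uses a single application of Lemma \ref{ift} but must redo the two analytic estimates for $\varphi^\eps_+$, whereas yours recycles Lemma \ref{cginf1} wholesale at the cost of justifying the truncation homotopy. The two obstacles you flag are exactly the delicate points, and both resolve as you anticipate; in particular, all the constraints on $\mu$ are one-sided, hence simultaneously satisfiable since $K(\varphi)$ and $K(\varphi^\eps_\pm)$ are finite.
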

\begin{proof}
We deal with $\varphi^\eps_+$ (the argument for $\varphi^\eps_-$ is similar). We set
\[S_+=\left\{u\in\partial B_1(0) :\,\esssup_\Omega u>0\right\}, \ B_+=\{tu :\,t\in[0,1],u\in S_+\}.\]
As in Lemma \ref{cginf1}, using {\bf H}\ref{h1} - \ref{h3} we prove that
\[\lim_{t\to \infty}\varphi^\eps_+(tu)=-\infty \ \mbox{for all $u\in S_+$,}\]
and find
\[\mu<\inf_{K(\varphi^\eps_+)\cup B_+}\varphi(u)\]
such that
\[\langle v^*,v\rangle<0 \ \mbox{for all $v\in(\varphi^\eps_+)^{-1}(\mu)$, $v^*\in\partial\varphi^\eps_+(v)$.}\]
Now, Lemma \ref{ift} ensures the existence of a continuous mapping $\tau_+:S_+\to(1,\infty)$  such that for all $u\in S_+$, $t\geq 1$
\[\varphi^\eps_+(tu) \begin{cases}
>\mu &\mbox{if $t<\tau_+(u)$}, \\
=\mu &\mbox{if $t=\tau_+(u)$}, \\
<\mu &\mbox{if $t>\tau_+(u)$}.
\end{cases}\]
In particular, we have
\[\overline{(\varphi^\eps_+)}^\mu=\left\{tu:\,u\in S_+,\,t\ge\tau_+(u)\right\}.\]
We set
\[D_+=\left\{tu:\,u\in S_+,\,t\ge 1\right\}\]
and for all $(s,tu)\in[0,1]\times D_+$ we set
\[h_+(s,tu)=\begin{cases}
(1-s)tu+s\tau_+(u)u &\mbox{if $t<\tau_+(u)$}, \\
tu &\mbox{if $t\geq\tau_+(u)$},
\end{cases} \quad 
\tilde h_+(s,tu)=(1-s)tu+su.\]
Therefore, we see that $\overline{(\varphi^\eps_+)}^\mu$ and $S_+$ are strong deformation retracts of $D_+$ by means of $h_+$ and $\tilde h_+$, respectively. So we have for all $k\in\N_0$
\begin{equation}\label{cgi4}
H_k(W^{1,p}(\Omega),\overline{(\varphi^\eps_+)}^\mu)=H_k(W^{1,p}(\Omega),D_+)=H_k(W^{1,p}(\Omega),S_+).
\end{equation}
We set $u_0(x)=|\Omega|^{-1/p}$ for all $x\in\Omega$, hence $u_0\in S_+$. Set for all $(s,u)\in[0,1]\times S_+$,
\[\hat h(s,u)=\frac{(1-s)u+su_0}{\|(1-s)u+su_0\|}\]
(note that $\hat h$ is well defined as $u$ is positive somewhere), so $\hat h:[0,1]\times S_+\to S_+$ is a continuous deformation and $\{u_0\}$ turns out to be a strong deformation retract of $S_+$. So, $S_+$ is contractible in itself and by \eqref{cgi4} and the definition of critical groups at infinity we have $C_k(\varphi^\eps_+,\infty)=0$ for all $k\in\N_0$.
\end{proof}

From Lemma \ref{zero} and Theorem \ref{two} we know that $\varphi$ has at least three critical points, namely $0$, $u_+$, and $u_-$. We will conclude the proof of Theorem \ref{three} by proving the existence of a fourth critical point. We argue by contradiction, so in the following we assume
\begin{equation}\label{abs}
K(\varphi)=\{0,u_+,u_-\}
\end{equation}
(in particular, all critical points of $\varphi$ are isolated). We aim at applying the nonsmooth Poincar\'e-Hopf formula \eqref{ph}, so we need to compute the critical groups of $\varphi$ at all of its critical points (also the critical groups of $\varphi^\eps_\pm$ will be involved in our argument). We begin by computing the critical groups at $0$:

\begin{lem}\label{cg0}
If hypotheses {\bf H} and \eqref{abs} hold, then $C_k(\varphi,0)=C_k(\varphi^\eps_\pm,0)=\delta_{k,0}\R$ for all $k\in\N_0$.
\end{lem}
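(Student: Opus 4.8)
The plan is to exploit the fact, established in Lemma \ref{zero}, that $0$ is a local minimizer of each of the functionals $\varphi$ and $\varphi^\eps_\pm$, together with the standing finiteness assumption of this section (and \eqref{abs}), which guarantees that $0$ is an \emph{isolated} critical point of all three functionals. Throughout I write $\psi$ for any one of $\varphi,\varphi^\eps_+,\varphi^\eps_-$ and set $c=\psi(0)$, so that it suffices to prove $C_k(\psi,0)=\delta_{k,0}\R$ for all $k\in\N_0$.

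The crucial step is to upgrade ``local minimizer'' to ``strict local minimizer''. Since $0$ is isolated, I fix $\rho>0$ with $K(\psi)\cap\overline B_\rho(0)=\{0\}$ and, shrinking $\rho$ if necessary, $\psi(v)\ge\psi(0)=c$ for all $v\in\overline B_\rho(0)$. Suppose, for contradiction, that $0$ were not a strict local minimizer; then there is $v\in B_\rho(0)\setminus\{0\}$ with $\psi(v)=c$. Since $\psi\ge c=\psi(v)$ on the neighborhood $B_\rho(0)$ of $v$, the point $v$ is itself a local minimizer of $\psi$, so by Lemma \ref{gg} \ref{gg8} we have $0\in\partial\psi(v)$, that is $v\in K(\psi)\cap\overline B_\rho(0)$ with $v\ne 0$, contradicting the choice of $\rho$. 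Hence $\psi(v)>c$ for all $v\in B_\rho(0)\setminus\{0\}$, so $0$ is a strict local minimizer.

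With strictness in hand the homology computation is immediate. Taking the neighborhood $U=B_\rho(0)$ in the definition of the critical groups, strictness gives $\overline\psi^c\cap U=\{0\}$, whence $(\overline\psi^c\cap U)\setminus\{0\}=\emptyset$. Consequently
\[
C_k(\psi,0)=H_k\big(\overline\psi^c\cap U,(\overline\psi^c\cap U)\setminus\{0\}\big)=H_k(\{0\},\emptyset)=H_k(\{0\})=\delta_{k,0}\R
\]
for every $k\in\N_0$, the last equality being the singular homology of a point. Applying this to $\psi=\varphi$ and $\psi=\varphi^\eps_\pm$ yields the claim.

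The only delicate point is the strictness argument, which rests on the elementary observation that any point attaining the minimal value near an isolated local minimizer would be a second nearby critical point; the remainder is a direct unwinding of the definition of critical groups. The independence of $C_k(\psi,0)$ from the choice of $U$ (recorded after the definition of critical groups, via Theorem \ref{sdl} and excision) legitimizes the particular choice $U=B_\rho(0)$.
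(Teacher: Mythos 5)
Your proposal is correct and follows essentially the same route as the paper: the authors likewise combine Lemma \ref{zero} with \eqref{abs} (and the standing finiteness assumption) to deduce that $0$ is a \emph{strict} local minimizer of $\varphi$ and $\varphi^\eps_\pm$, and then conclude $C_k(\cdot,0)=\delta_{k,0}\R$. The only difference is cosmetic: where the paper cites \cite[Axiom 7, Remark 6.10]{MMP} for the critical groups of a strict local minimizer, you unwind the definition directly via $\overline\varphi^{c}\cap U=\{0\}$, which is a valid substitute.
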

\begin{proof}
from Lemma \ref{zero} and \eqref{abs} we easily deduce that $0$ is a strict local minimizer of both $\varphi$ and $\varphi_\pm^\eps$. So, the conclusion follows at once from \cite[Axiom 7, Remark 6.10]{MMP}.
\end{proof}

Computation of the critical groups at $u_\pm$ is a more delicate issue:

\begin{lem}\label{cgpm}
If hypotheses {\bf H} and \eqref{abs} hold, then $C_k(\varphi,u_\pm)=\delta_{k,1}\R$ for all $k\in\N_0$.
\end{lem}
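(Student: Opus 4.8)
The plan is to deduce the statement from the corresponding computation for the penalized functional $\varphi^\eps_+$ (and symmetrically $\varphi^\eps_-$), whose critical set is much simpler. First I would record that, under \eqref{abs} and the reduction made at the beginning of this section, $K(\varphi^\eps_+)=\{0,u_+\}$: indeed by Lemma \ref{var2} every nonzero critical point of $\varphi^\eps_+$ lies in ${\rm int}(C_+)$ and solves \eqref{prob}, hence coincides with $u_+$ by \eqref{abs}, while $0\in K(\varphi^\eps_+)$ by Lemma \ref{zero}. Moreover the mountain pass construction of Theorem \ref{two} gives $\varphi^\eps_+(u_+)\ge\eta_\rho>\varphi^\eps_+(0)$, so the two critical values are distinct and ordered: writing $c_0=\varphi^\eps_+(0)$ and $c_1=\varphi^\eps_+(u_+)$ we have $c_0<c_1$.

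The first, and most delicate, step is the transfer identity
\[C_k(\varphi,u_+)=C_k(\varphi^\eps_+,u_+)\qquad\mbox{for all }k\in\N_0.\]
Its content is that critical groups are a local invariant and that $\varphi,\varphi^\eps_+$ coincide on the positive cone $W_+$; since $u_+\in{\rm int}(C_+)$, they in fact coincide on a whole $C^1(\overline\Omega)$-neighborhood of $u_+$. The obstacle is the mismatch of topologies: ${\rm int}(W_+)=\emptyset$, so no $W^{1,p}(\Omega)$-ball around $u_+$ remains inside $W_+$. I would bridge this exactly in the spirit of Lemma \ref{zero} (following Iannizzotto $\&$ Papageorgiou \cite{IP}): consider the affine homotopy $\psi_t=(1-t)\varphi+t\varphi^\eps_+$, observe that $\psi_t=\varphi$ on $W_+$ and that, because $u_+>0$, one has $N_+(u_+)=N(u_+)$, so that $0\in\partial\psi_t(u_+)$ and $u_+$ stays an isolated critical point of every $\psi_t$ in a fixed neighborhood (using the nonlinear regularity and the strong maximum principle already invoked in Lemma \ref{var2}); the homotopy invariance of critical groups for locally Lipschitz functionals satisfying $(C)$ then yields the identity. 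This is the step where the genuine difficulty is concentrated.

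It remains to prove $C_k(\varphi^\eps_+,u_+)=\delta_{k,1}\R$, which I would obtain by a purely homological argument exploiting that $\varphi^\eps_+$ has only the two critical points $0,u_+$. Fix reals $a<c_0<d<c_1<b$, with $b$ above the only critical value $c_1$, and set $A=\overline{(\varphi^\eps_+)}^a$, $D=\overline{(\varphi^\eps_+)}^d$, $B=\overline{(\varphi^\eps_+)}^b$. By Lemma \ref{cginf2}, $H_k(W^{1,p}(\Omega),A)=C_k(\varphi^\eps_+,\infty)=0$ (recall $a<c_0=\inf_{K(\varphi^\eps_+)}\varphi^\eps_+$); and by Theorem \ref{sdl}, applied with the levels $b$ and $\infty$, the set $B$ is a strong deformation retract of $W^{1,p}(\Omega)$, whence $H_k(W^{1,p}(\Omega),B)=0$ as well. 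The long exact sequence of the triple $\big(W^{1,p}(\Omega),B,A\big)$ then forces $H_k(B,A)=0$ for every $k$. On the other hand, Lemma \ref{dec} gives $H_k(D,A)=C_k(\varphi^\eps_+,0)$ and $H_k(B,D)=C_k(\varphi^\eps_+,u_+)$, while Lemma \ref{cg0} yields $C_k(\varphi^\eps_+,0)=\delta_{k,0}\R$.

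Feeding $H_k(B,A)=0$ into the long exact sequence of the triple $(B,D,A)$,
\[\cdots\to H_k(B,A)\to H_k(B,D)\xrightarrow{\ \partial\ }H_{k-1}(D,A)\to H_{k-1}(B,A)\to\cdots,\]
collapses it to the isomorphisms
\[C_k(\varphi^\eps_+,u_+)=H_k(B,D)\cong H_{k-1}(D,A)=C_{k-1}(\varphi^\eps_+,0)=\delta_{k-1,0}\R=\delta_{k,1}\R.\]
Combined with the transfer identity this gives $C_k(\varphi,u_+)=\delta_{k,1}\R$; the statement for $u_-$ follows verbatim by replacing $\varphi^\eps_+$ with $\varphi^\eps_-$ and ${\rm int}(C_+)$ with $-{\rm int}(C_+)$. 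I expect the homological core to be routine once the sublevel sets are identified, so that the real work lies in justifying the transfer from $\varphi^\eps_+$ to $\varphi$.
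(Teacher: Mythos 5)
Your proposal is correct and takes essentially the same approach as the paper: the same homotopy $\psi_t=(1-t)\varphi+t\varphi^\eps_+$ with uniform isolation of $u_+$ (via the $L^\infty$ and $C^{1,\gamma}$ regularity forcing $C^1$-convergence into ${\rm int}(C_+)$) and the Corvellec--Hantoute homotopy invariance to get $C_k(\varphi,u_+)=C_k(\varphi^\eps_+,u_+)$, followed by an exact-sequence computation from $C_k(\varphi^\eps_+,\infty)=0$ and $C_k(\varphi^\eps_+,0)=\delta_{k,0}\R$. The only (cosmetic) difference is the homological bookkeeping: the paper chooses $b<\varphi^\eps_+(0)<a<\varphi^\eps_+(u_+)$ and uses the single exact sequence of the triple $\big(W^{1,p}(\Omega),\overline{(\varphi^\eps_+)}^a,\overline{(\varphi^\eps_+)}^b\big)$, identifying $H_k\big(W^{1,p}(\Omega),\overline{(\varphi^\eps_+)}^a\big)=C_k(\varphi^\eps_+,u_+)$ directly through Lemma \ref{dec} with upper level $\infty$, whereas you interpose a third sublevel and run two triples --- equivalent, just one step longer.
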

\begin{proof}
We consider $u_+$ (the argument for $u_-$ is analogous). First we prove that
\begin{equation}\label{shift}
C_k(\varphi,u_+)=C_k(\varphi^\eps_+,u_+).
\end{equation}
For all $t\in[0,1]$ we set $\psi_t=(1-t)\varphi+t\varphi^\eps_+$. Clearly, $\psi_t:W^{1,p}(\Omega)\to\R$ is locally Lipschitz continuous and satisfies $(C)$ for all $t\in[0,1]$; moreover $\psi_0=\varphi$ and $\psi_1=\varphi_+^\eps$. We claim that $u_+\in K(\psi_t)$ for all $t\in[0,1]$. Indeed, setting $\tilde\varphi=\restr{\varphi}{C^1(\overline\Omega)}$ and $\tilde\psi_t=\restr{\psi_t}{C^1(\overline\Omega)}$, we easily see that $\tilde\varphi,\tilde\psi_t:C^1(\overline\Omega)\to\R$ are locally Lipschitz. Since $0\in\partial\varphi(u_+)$, from the continuous embedding $C^1(\overline\Omega)\hookrightarrow W^{1,p}(\Omega)$, we see that $0\in\partial\tilde\varphi(u_+)$, i.e., $u_+\in K(\tilde\varphi)$.

Now, we recall that $u_+\in{\rm int}(C_+)$ and we note that $\tilde\varphi=\tilde\psi_t$ in $C_+$, so for all $v\in C^1(\overline\Omega)$ we have
\begin{align*}
\tilde\psi_t^\circ(u_+;v) &= \limsup_{w\to u_+\above 0pt \tau\to 0^+}\frac{\tilde\psi_t(w+\tau v)-\tilde\psi_t(w)}{\tau} \\
&= \limsup_{w\to u_+\above 0pt \tau\to 0^+}\frac{\tilde\varphi(w+\tau v)-\tilde\varphi(w)}{\tau} = \tilde\varphi^\circ(u_+;v) \ge 0,
\end{align*}
hence $0\in \partial \tilde\psi_t(u_+)$, that is, $u_+\in K(\tilde\psi_t)$. For all $v\in W^{1,p}(\Omega)$ there exists a sequence $(v_n)$ in $C^1(\overline\Omega)$ such that $v_n\to v$ in $W^{1,p}(\Omega)$, so by Lemma \ref{gd}\ref{gd1} we have
\[\psi_t^\circ(u_+;v) = \lim_n \psi_t^\circ(u_+;v_n) \ge \liminf_n\tilde\psi_t^\circ(u_+;v_n) \ge 0,\]
hence $u_+\in K(\psi_t)$.

Now, we prove that $u_+$ is an isolated critical point of $\psi_t$, uniformly with respect to $t\in[0,1]$, arguing by contradiction. In fact, assume that there exist sequences $(t_n)_n$ in $[0,1]$ and $(u_n)_n$ in $W^{1,p}(\Omega)\setminus\{u_+\}$ such that  $u_n\in K(\psi_{t_n})$ for all $n\in\N$ and $u_n\to u_+$ in $W^{1,p}(\Omega)$ as $n\to\infty$. Reasoning as in Lemmas \ref{var1} and \ref{var2}, for all $n\in\N$ we can find $w_n\in N(u_n)$ and $w_{n,+}\in N_+(u_n)$ such that  $u_n$ is a weak solution of the auxiliary problem
\begin{equation}\label{auxn}
\begin{cases}
-\Delta_p u=t_n\eps (u_n^-)^{p-1}+(1-t_n)w_n+t_n w_{n,+} &\text{in $\Omega$}, \\
\displaystyle\frac{\partial u}{\partial\nu}=0 &\text{on $\partial\Omega$}.
\end{cases}
\end{equation}
By {\bf H}$(i)$, we have for all $n\in\N$
\[|t_n\eps (u_n^-)^{p-1}+(1-t_n)w_n+t_n w_{n,+}|\le c(1+|u_n|^{r-1}) \ \mbox{a.e. in $\Omega$,}\]
with a constant $c>0$ independent of $n\in\N$. By \cite[Theorem 1.5.5, Remark 1.5.9]{GP1}, the sequence $(u_n)_n$ turns out to be bounded in $L^\infty(\Omega)$, and by \cite[Theorem 2]{L} $(u_n)$ is bounded also in $C^{1,\gamma}(\overline\Omega)$ ($\gamma\in(0,1)$). By the compact embedding $C^{1,\gamma}(\overline\Omega)\hookrightarrow C^1(\overline\Omega)$, passing if necessary to a subsequence, we have $u_n\to u_+$ in $C^1(\overline\Omega)$. So, for $n\in\N$ big enough we have $u_n\in{\rm int}(C_+)$, in particular $u_n^-=0$ and $N_+(u_n)=N(u_n)$. By definition of $N(u_n)$ and convexity of the set $\partial j(x,u_n)$, we have
\[(1-t_n)w_n+t_n w_{n,+}\in N(u_n).\]
Thus the right-hand side of \eqref{auxn} is a selection of $\partial j(\cdot,u_n)$ a.e. in $\Omega$ and $u_n\in{\rm int}(C_+)\setminus\{u_+\}$ is a solution of \eqref{prob}, against our assumption.

Finally, we note that the mapping $t\mapsto\psi_t$ is continuous with respect to the norm $\|\cdot\|_{1,\infty}$ in a neighborhood of $u_+$. So, we can apply the homotopy invariance of critical groups for non-smooth functionals (reasoning as in Corvellec \& Hantoute \cite[Theorem 5.2]{CH}) and conclude that $C_k(\psi_t,u_+)$ is independent of $t$. In particular we have \eqref{shift}.

Now, let $a,b\in\R$ be such that  $b<\varphi^\eps_+(0)<a<\varphi^\eps_+(u_+)$ (recall from the proof of Theorem \ref{two} that $\varphi^\eps_+(0)<\varphi^\eps_+(u_+)$). Set $A=\overline{(\varphi^\eps_+)}^a$, $B=\overline{(\varphi^\eps_+)}^b$, so that $B\subset A$ and by \cite[Proposition 6.14]{MMP} the following long sequence is exact:
\[\cdots\rightarrow H_k(W^{1,p}(\Omega),B)\xrightarrow{j_*} H_k(W^{1,p}(\Omega),A)\xrightarrow{\partial_*} H_{k-1}(A,B)\xrightarrow{i_*} H_{k-1}(W^{1,p}(\Omega),B)\rightarrow\cdots\]
Here $j_*$, $i_*$ are the homomorphisms induced by the inclusion mappings $j:(W^{1,p}(\Omega),B)\to (W^{1,p}(\Omega),A)$, $i:(A,B)\to (W^{1,p}(\Omega),B)$, respectively, and $\partial_*=\ell_* \circ \partial_k$, where $\ell:(A,\emptyset)\to (A,B)$ is the inclusion map and $\partial_k:H_k(W^{1,p}(\Omega),A)\to H_{k-1}(A)$ is the boundary homomorphism, see \cite[Definition 6.9]{MMP}. By \eqref{abs} and Lemma \ref{var2}, we have $K(\varphi^\eps_+)=\{0,u_+\}$. So, Lemma \ref{cginf2} implies
\[H_k(W^{1,p}(\Omega),B)=C_k(\varphi^\eps_+,\infty)=0.\]
Moreover, Lemma \ref{dec} implies
\[H_k(W^{1,p}(\Omega),A)=C_k(\varphi^\eps_+,u_+), \quad H_{k-1}(A,B)=C_{k-1}(\varphi^\eps_+,0).\]
So, the sequence above rephrases as the shorter sequence
\[0\rightarrow C_k(\varphi^\eps_+,u_+)\xrightarrow{\partial_*} C_{k-1}(\varphi^\eps_+,0)\rightarrow 0,\]
i.e., $\partial_*$ is an isomorphism. Then, by \eqref{cg0}, then, we have
\[C_k(\varphi^\eps_+,u_+)=\delta_{k-1,0}\R=\delta_{k,1}\R,\]
hence by \eqref{shift} we get the conclusion.
\end{proof}

Finally, we conclude the proof our main result.
\vskip6pt
\noindent
{\bf Proof of Theorem \ref{three}.} From Lemma \ref{zero} and Theorem \ref{two} we already know that \eqref{prob} admits the solutions $0$, $u_+$, and $u_-$. We argue by contradiction, assuming \eqref{abs}. We use Lemmas \ref{cginf1}, \ref{cg0}, and \ref{cgpm} into \eqref{ph}, so we obtain
\[\sum_{k=0}^\infty(\delta_{k,0}+2\delta_{k,1})(-1)^k=0,\]
i.e., $-1=0$, a contradiction. Thus, \eqref{abs} cannot hold, and there exists $\tilde u\in K(\varphi)\setminus\{0,u_+,u_-\}$. By Lemma \ref{var1}, we see that $\tilde u\in C^1(\overline\Omega)$ and $\tilde u$ is a solution of \eqref{prob}, which concludes the proof. \qed

\begin{rem}\label{further}
In the smooth case, Morse theory can be used in order to achieve further information on the solution set of a boundary value problem, for instance to produce a fourth non-zero solution if the equation is semilinear ($p=2$) and the reaction term is asymptotically linear at infinity (see for instance Mugnai $\&$ Papageorgiou \cite[Section 6]{MP}). Nevertheless, such results typically require the use of the second-order derivative of the energy functional, a notion which seems to have no counterpart within the framework of locally Lipschitz functionals on a Banach space.
\end{rem}

\noindent
\textbf{Ackowledgement.} We would like to thank Prof.\ N.S. Papageorgiou and Prof.\ J.N. Corvellec for their useful suggestions. Part of the present work was accomplished while F.C. was at the University of Perugia with a grant from {\em Accademia Nazionale dei Lincei}, under the supervision of Prof.\ P. Pucci, and A.I. was visiting the University of Perugia, which he recalls with deep gratitude. D.M. is member of the {\em Gruppo Nazionale per l'Analisi Matematica, la Probabilit\`a e le loro Applicazioni} (GNAMPA) of the {\em Istituto Nazionale di Alta Matematica} (INdAM), and is supported by the GNAMPA Project {\sl Systems with irregular operators}.

\bigskip

\end{document}